\providecommand{\U}[1]{\protect\rule{.1in}{.1in}}
\newtheorem{theorem}{Theorem}[section]
\newtheorem{conjecture}[theorem]{Conjecture}
\newtheorem{corollary}[theorem]{Corollary}
\newtheorem{definition}[theorem]{Definition}
\newtheorem{example}[theorem]{Example}
\newtheorem{lemma}[theorem]{Lemma}
\newtheorem{notation}[theorem]{Notation}
\newtheorem{proposition}[theorem]{Proposition}
\def\N{\mathbb{N}}
\newcommand{\vs}[1]{\langle #1 \rangle}
\newcommand{\arrow}[2]{\overset{#1}{\xrightarrow{\hspace*{#2}}}}
\DeclareMathOperator{\gaps}{gaps}
\DeclareMathOperator{\cogaps}{cogaps}
\DeclareMathOperator{\maxgen}{maxgen}
\DeclareMathOperator{\pred}{pred}
\DeclareMathOperator{\mg}{mg}
\DeclareMathOperator{\mc}{mc}
\title{On the Gotzmann threshold of monomials}
\thanks{The authors gratefully acknowledge support from INdAM for visits at the University of Reggio Calabria in July 2019, July 2022 and February 2023, and from the Mathematics laboratory LMPA of Universit\'e du Littoral C\^ote d'Opale for a visit in February 2024.}
\author{V. Bonanzinga and S. Eliahou}
\date{\today}
\begin{document}
\maketitle

\begin{abstract} Let $R_n=K[x_1,\dots,x_n]$ be the $n$-variable polynomial ring over a field $K$. Let $S_n$ denote the set of monomials in $R_n$. A monomial $u \in S_n$ is a \textit{Gotzmann monomial} if the Borel-stable monomial ideal $\langle u \rangle$ it generates in $R_n$ is a Gotzmann ideal. A longstanding open problem is to determine all Gotzmann monomials in $R_n$. Given $u_0 \in S_{n-1}$, its \textit{Gotzmann threshold} is the unique nonnegative integer $t_0=\tau_n(u_0)$ such that $u_0x_n^t$ is a Gotzmann monomial in $R_n$ if and only if $t \ge t_0$. Currently, the function $\tau_n$ is exactly known for $n \le 4$ only. We present here an efficient procedure to determine $\tau_n(u_0)$ for all $n$ and all $u_0 \in S_{n-1}$. As an application, in the critical case $u_0=x_2^d$, we determine $\tau_5(x_2^d)$ for all $d$ and we conjecture that  for $n \ge 6$, $\tau_n(x_2^d)$ is a polynomial in $d$ of degree $2^{n-2}$ and dominant term equal to that of the $(n-2)$-iterated binomial coefficient
$$
\binom {\binom {\binom d2}2}{\stackrel{\cdots}2}.
$$
\end{abstract}

\noindent
2020 \emph{Mathematics Subject Classification.} 13F20; 13D40; 05E40.

\smallskip

\noindent
\emph{Key words and phrases.} Monomial ideal; Borel-stable ideal; Lexsegment; Gotzmann ideal; Gotzmann persistence theorem.

\section{Introduction}\label{sec introduction}

Let $K$ be a field. Let $R_n=K[x_1,\dots,x_n]$ be the $n$-variable polynomial algebra over $K$ endowed with the standard grading $\deg(x_i)=1$ for all $i$. We denote by $S_n=\{x_1^{a_1}\cdots x_n^{a_n} \mid a_i \in \N \ \text{for all }i\}$ the set of monomials in $R_n$, and by $S_{n,d} \subset S_n$ the subset of monomials $u$ of degree $\deg(u)=\sum_i a_i=d$.

A monomial ideal $J \subseteq R_n$ is said to be \emph{Borel-stable} or \emph{strongly stable} if for any monomial $v \in J$ and any variable $x_j$ dividing $v$, one has $x_iv/x_j \in J$ for all $1 \le i \le j$. Given a monomial $u \in S_n$, let $\vs{u}$ denote the smallest Borel-stable monomial ideal in $R_n$ containing $u$, and let $B(u)$ denote the unique minimal system of monomial generators of $\vs{u}$. Then $B(u)$ is the smallest set of monomials containing $u$ and stable under the operations $v \mapsto vx_i/x_j$ whenever $x_j$ divides $v$ and $i \le j$. See e.g.~\cite{FMS}.

Recall that a homogeneous ideal $I \subseteq R_n$ is a \textit{Gotzmann ideal} if, from a certain degree on, its Hilbert function attains Macaulay's lower bound. See \emph{e.g.} \cite{B,He} for more details. Determining which homogeneous ideals are Gotzmann ideals is notoriously difficult. See e.g.~\cite{AAH, AHH, Ho,HM,I, MH, Mu1,Mu2,OOS,PS,S} for more on Gotzmann ideals. In this paper, we focus on this question for principal-Borel monomial ideals $\vs{u}$. Whence the following definition.

\begin{definition} Let $u \in S_n$. We say that $u$ is a \emph{Gotzmann monomial} if its associated Borel-stable monomial ideal $\vs{u}$ is a Gotzmann ideal. 
\end{definition}

Even for this restricted family of ideals $\vs{u}$, determining those which are Gotzmann is very difficult. So far, it has only been solved for $n \le 4$. 

\medskip

The following theorem of Gotzmann is classical~\cite{G}.

\begin{theorem}\label{thm Gotzmann} Let $u \in S_n$. Then there exists an integer $\tau_n(u) \ge 0$ such that $ux_n^s$ is a Gotzmann monomial in $R_n$ if and only if $s \ge \tau_n(u)$.
\end{theorem}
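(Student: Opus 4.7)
Set $d = \deg(u)$, and for each $s \ge 0$ write $I_s = \vs{u x_n^s}$, the Borel-stable monomial ideal whose minimal generating set $B(u x_n^s)$ lies in $S_{n, d+s}$. The plan is to establish the theorem by proving two complementary statements: \emph{persistence} (if $I_s$ is Gotzmann then $I_{s+1}$ is Gotzmann) and \emph{eventuality} (there exists $s_0$ such that $I_{s_0}$ is Gotzmann). Together, persistence and eventuality force $\{s \ge 0 : I_s \text{ is Gotzmann}\}$ to be a nonempty upward-closed subset of $\N$, hence equal to $[\tau_n(u), \infty)$ for a unique nonnegative integer $\tau_n(u)$.

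For persistence, the key combinatorial input is the identity
\[
\{w \in B(u x_n^{s+1}) : x_n \mid w\} \;=\; x_n \cdot B(u x_n^s),
\]
immediate from the partial-sum characterization of Borel closure: $v = x_1^{b_1}\cdots x_n^{b_n}$ belongs to $B(x_1^{a_1}\cdots x_n^{a_n})$ exactly when $\sum_{k\le i}b_k \ge \sum_{k\le i}a_k$ for $i \le n-1$ (with equality of total degrees), and this condition is insensitive to the $x_n$-exponent. In particular $x_n (I_s)_{d+s} \subseteq (I_{s+1})_{d+s+1}$, and $|B(u x_n^{s+1})|$ equals $|B(u x_n^s)|$ plus the count of minimal generators in degree $d+s+1$ with vanishing $x_n$-exponent. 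Assuming $I_s$ is Gotzmann, Gotzmann's classical persistence theorem ensures that $\dim_K (I_s)_{d+s+k}$ attains the Macaulay minimum for every $k \ge 0$. One then transfers this minimality to $I_{s+1}$ through the displayed identity, deducing that the Hilbert function of $R_n/I_{s+1}$ also attains Macaulay's lower bound from degree $d+s+1$ onward.

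For eventuality, I would argue that for $s$ sufficiently large (depending on the exponents of $u$), the cardinality $|B(u x_n^s)|$ matches that of a lexsegment ideal in $S_{n,d+s}$ having the same quotient Hilbert polynomial; such lexsegment ideals are automatically Gotzmann by a classical result (see \cite{B}). The intuition is that as $s$ grows, the generator $u x_n^s$ becomes overwhelmingly dominated by its $x_n$-factor, so $B(u x_n^s)$ fills out an initial segment of $S_{n, d+s}$ in a controlled way, and an explicit threshold can be extracted by direct enumeration of exponent vectors satisfying the partial-sum inequalities.

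The main obstacle I expect is the persistence step: Gotzmann's classical persistence controls growth \emph{within} a single fixed ideal, whereas here we must compare two distinct ideals $I_s \supsetneq I_{s+1}$ differing in their minimal generators. Careful bookkeeping of the Macaulay representations of $|B(u x_n^s)|$ and $|B(u x_n^{s+1})|$, together with the displayed identity on Borel closures, is where essentially all the combinatorial work concentrates.
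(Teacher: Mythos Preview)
The paper does not prove this statement: it is presented as ``the following theorem of Gotzmann is classical'' and attributed to~\cite{G}. So there is no argument in the paper to compare yours against; you are attempting to reconstruct a proof of a result the authors take as given.

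Your persistence step is closer to complete than you seem to think, but the identity you display is too weak for the job. What is actually needed is the stronger fact $\shad(B(ux_n^s)) = B(ux_n^{s+1})$, equivalently $(I_s)_j = (I_{s+1})_j$ for every $j \ge d+s+1$; this follows from the same partial-sum description you invoke, once one also checks that every $w \in B(ux_n^{s+1})$ is divisible by some element of $B(ux_n^s)$ (divide $w$ by $x_{\max(w)}$ and verify the inequalities). Granting this, if $I_s$ is Gotzmann then by the classical persistence theorem its growth is Macaulay-minimal at every degree $\ge d+s$, in particular from degree $d+s+1$ to $d+s+2$; since $I_{s+1}$ agrees with $I_s$ in those degrees, $I_{s+1}$ is Gotzmann as well. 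No ``careful bookkeeping of Macaulay representations'' is required. Your displayed identity, by contrast, only yields the inclusion $x_n (I_s)_{d+s} \subseteq (I_{s+1})_{d+s+1}$, and from a bare inclusion one cannot transfer minimality of growth.

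The genuine gap is eventuality. You give intuition but no argument, and ``direct enumeration of exponent vectors'' will not by itself produce a threshold. A clean route, once you have $(I_0)_{d+s} = (I_s)_{d+s}$ from the shadow identity above, is to observe that \emph{any} homogeneous ideal has Macaulay-minimal growth in all sufficiently high degrees, since its Hilbert function eventually agrees with its Hilbert polynomial and every Hilbert polynomial eventually exhibits minimal growth; applying this to $I_0$ gives an $s_0$ such that $I_{s_0}$ is Gotzmann. But that last fact about Hilbert polynomials is exactly the content of Gotzmann's results in~\cite{G}, which is presumably why the paper simply cites that reference rather than arguing further.
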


For instance, $u$ is a Gotzmann monomial in $R_n$ if and only if $\tau_n(u)=~0$. 

\begin{definition} We shall refer to the above integer $\tau_n(u)$ as the \emph{Gotzmann threshold of $u$ in $R_n$}.
\end{definition}

Determining $\tau_n(u)$ for all $u \in S_n$ reduces to determine $\tau_n(u_0)$ for all $u_0 \in S_{n-1}$. Indeed, decomposing $u=u_0x_n^t$ with $u_0 \in S_{n-1}$ and $t \ge 0$, Theorem~\ref{thm Gotzmann} implies
$$
\tau_n(u) = \max(\tau_n(u_0)-t, \, 0).
$$
That is, if $t \ge \tau_n(u_0)$, then $u_0x_n^t$ is a Gotzmann monomial in $R_n$ whence $\tau_n(u)=0$, whereas if $t < \tau_n(u_0)$, then $ux_n^s=u_0x_n^{t+s}$  is a Gotzmann monomial in $R_n$ if and only if  $t+s \ge \tau_n(u_0)$ if and only if $s \ge \tau_n(u_0)-t$.

\medskip

Currently, the function $\tau_n$ is only known for $n=2,3,4$. Indeed, we have $\tau_2(u)=0$ for all $u \in S_2$ since every monomial in $S_2$ is a Gotzmann monomial, as is easily seen. For $n=3$, it follows from results in~\cite{Mu2} that for $u_0=x_1^ax_2^b \in S_2$, we have
$$
\tau_3(x_1^ax_2^b)=\binom b2.
$$
Finally, the case $n=4$ was settled in~\cite{BE}, where for $u_0 = x_1^ax_2^bx_3^c \in S_3$, the precise formula for $\tau_4(u_0)$ turned out to be rather intricate:
\begin{equation}\label{eq tau4}
\tau_4(x_1^ax_2^bx_3^c)= \binom{\binom b2}2+\frac{b+4}3 \binom b2+(b+1)\binom{c+1}2+\binom{c+1}3-c.
\end{equation}
It is a general fact that the exponent $a$ of $x_1$ plays no role for the Gotzmann property~\cite{BE}. The case where $a=c=0$ is the most interesting, being responsible for the main terms of the above formula:
$$
\tau_4(x_2^b)= \binom{\binom b2}2+\frac{b+4}3 \binom b2.
$$

\medskip
It is an open problem to determine $\tau_n(u_0)$ for $n \ge 5$ and all $u_0 \in S_{n-1}$. In this paper, we develop a general method to compute $\tau_n(u_0)$ for any given $u_0 \in S_{n-1}$. Based on the above observations for $n=4$, it makes sense to focus on the case $u_0=x_2^d$ as a priority. (We switch to exponent $d$ for the \textit{degree} of the resulting polynomial formula.) As an application of our method, we obtain in Section~\ref{sec x2} the formula
\begin{equation}\label{eq formula}
\tau_5(x_2^d) = \binom{\binom{\binom{d}{2}}{2}+\binom{d+1}{3}+\binom{d}{2}}{2}-\binom{\binom{d}{2}}{3}+\binom{d+3}{4}-d.
\end{equation}
It remains an open problem to determine $\tau_n(x_2^d)$ for all $n \ge 6$. Based on the cases $3 \le n \le 5$, we conjecture that $\tau_n(x_2^d)$ is a polynomial in $d$ of degree $2^{n-2}$ and dominant term equal to that of the $(n-2)$-iterated binomial coefficient
$$
\binom {\binom {\binom d2}2}{\stackrel{\cdots}2}.
$$

\subsection{Contents} In Section~\ref{sec back}, we recall or introduce basic notions such as the maxgen monomial, gaps and cogaps, in terms of which Gotzmann monomials can be characterized. In Section~\ref{sec mg}, we develop tools to effectively exploit these characterizations. Our main result to compute the Gotzmann threshold $\tau_n(u)$ of any monomial $u \in S_n$ is proved in Section~\ref{sec main}. Finally, in Section~\ref{sec x2}, we apply our method to compute the Gotzmann threshold in the significant case $u=x_2^d$ in $S_n$ for $n=5$. We conclude with a conjecture pointing to the expected intricacy of the formula for $\tau_n(x_2^d)$ for $n \ge 6$.

\section{Background}\label{sec back}

In this section, we recall or introduce concepts with which Gotzmann monomials in $R_n$ can be characterized.

\subsection{The maxgen monomial}

\begin{notation} Let $u \in S_n \setminus \{1\}$. Let $k \le n$ be the largest index such that  $x_k$ divides $u$. We then set $\max(u)=k$ and $\lambda(u) = x_k$.
\end{notation}

\begin{definition} Let $B \subseteq S_{n,d}$ be a set of monomials of degree $d \ge 1$. The \emph{maxgen monomial} of $B$ is defined as
$$
\maxgen(B) = \prod_{u \in B} \lambda(u).
$$
For the empty set, we define $\maxgen(\emptyset) =1$.
\end{definition}
For instance, $S_{3,2}=\{x_1^2,x_1x_2,x_1x_3,x_2^2,x_2x_3,x_3^2\}$, hence $\maxgen(S_{3,2})=x_1x_2^2x_3^3$. Note that $\deg(\maxgen(B)) = |B|$ for all $B \subseteq S_{n,d}$.

\subsection{Gaps and cogaps}
\begin{notation}\label{nota gaps} Let $u \in S_{n,d}$. We denote by $B(u) \subseteq S_{n,d}$ the smallest set of monomials containing $u$ and stable under the operations $v \mapsto vx_i/x_j$ whenever $x_j$ divides $v$ and $i \le j$. 
\end{notation}

Thus, $B(u)$ is the minimal set of monomial generators of the principal-Borel ideal $\vs{u}$ spanned by $u$.

\smallskip
We order the set $S_{n,d}$ lexicographically, with $\max(S_{n,d})=x_1^d$ and $\min(S_{n,d})=x_n^d$. For $u,v \in S_n$, we shall write $u \le v$ \textit{exclusively when $\deg(u)=\deg(v)$} and $u$ is lexicographically smaller than or equal to $v$.

\begin{notation} Let $u \in S_{n,d} \setminus \{x_1^d\}$. We denote by $\pred_n(u)$, or simply $\pred(u)$ when the context is clear, the predecessor of $u$ in $S_{n,d}$, i.e. the smallest $v \in S_{n,d}$ such that $u < v$. For $\ell \ge 1$, we recursively denote by $\pred^\ell(u)$ the $\ell$th predecessor of $u$ in $S_{n,d}$, provided $\pred^{\ell-1}(u) \neq x_1^d$.
\end{notation}

\smallskip
Given $u \in S_{n,d}$, we will consider upward paths from $u$ to some larger $v \in S_{n,d}$. In that respect, an \emph{elementary step} is the shortest possible upward path from $u$, namely from $u$ to its predecessor $\pred_n(u)$ in $S_{n,d}$.

\smallskip
Recall~\cite{BE} that if $m=\max(u) \ge 2$ and $u=u_0 x_m^a$ with $\max(u_0) \le m-1$ and $a \ge 1$, then
\begin{equation}\label{eq pred}
\pred_n(u_0 x_m^a)=u_0x_{m-1}x_n^{a-1}.
\end{equation}

\begin{notation} For $u \le v \in S_{n,d}$, we denote
\begin{eqnarray*}
L_n(u) & = & \{z \in S_{n,d} \mid z \ge u\}, \\
L_n^*(v,u) & = & \{z \in S_{n,d} \mid v > z \ge u\} = L_n(u) \setminus L_n(v).
\end{eqnarray*}
\end{notation}

The sets $L_n(u)$ and $L_n^*(v,u)$ are called \textit{lexsegments} and \textit{lexintervals}, respectively. By a slight abuse of notation, we shall drop the index $n$ and write $L(u)$ for $L_n(u)$ and $L^*(v,u)$ for $L_n^*(v,u)$.

\begin{notation} For $u \le v \in S_{n,d}$, we denote
$$\mu_n(u,v) = \maxgen(L^*(v,u))$$
and call $\mu_n(u,v)$ the \emph{cost} of the upward path from $u$ to $v$ in $S_{n,d}$.
\end{notation}

Note that $\mu_n(u,u)=1$.
We shall use the \emph{arrow notation}
$$
u \arrow{\mu_n(u,v)}{1.6cm} v 
$$
to indicate the upward path from $u$ to $v$ in $S_{n,d}$ together with its cost $\mu_n(u,v) = \maxgen(L^*(v,u))$. The most basic step is from $u$ to $\pred(u)$. Thus $\mu_n(u,\pred(u))=\lambda(u)$, the last variable dividing $u$. In arrow notation, this is summarized as 
$$
u \arrow{\lambda(u)}{1cm} \pred(u).
$$
\begin{example} Let $u=x_2^2x_4x_5, v=x_2^2x_3x_4 \in S_5$. Then $v > u$ and the elementary steps of the upward path from $u$ to $v$ in $S_5$ are
$$
u=x_2^2x_4x_5 \arrow{x_5}{0.6cm} x_2^2x_4^2 \arrow{x_4}{0.6cm} x_2^2x_3x_5 \arrow{x_5}{0.6cm} x_2^2x_3x_4=v. 
$$
Thus $\mu_5(u,v)=x_4x_5^2$, i.e. $u \arrow{x_4x_5^2}{1cm} v$.
\end{example}

\begin{notation}\label{nota u tilde} Let $u \in S_{n,d}$. Noting that $B(u) \subseteq L(u)$, we define 
$$
\begin{aligned}
\gaps_n(u) & = L(u) \setminus B(u), \\
\cogaps_n(u) & = L^*(\pred^g(u),u), \textrm{ where } g = |\gaps_n(u)| \\
& = L^*(\tilde{u},u), \textrm{ where } \tilde{u} =  \pred^g(u).
\end{aligned}
$$
\end{notation}

\begin{notation}\label{nota mc}
Let $n \ge 3$. We denote by $\mg_n$ and $\mc_n$ the maps from $S_n$ to $S_n$ defined by
$$
\begin{aligned}
\mg_n(u) & = \maxgen(\gaps_n(u)) \\
\mc_n(u) & = \maxgen(\cogaps_n(u)) \\
& = \mu_n(u, \tilde{u})
\end{aligned}
$$
for all $u \in S_n$. 
\end{notation}

\subsection{Characterizations of Gotzmann monomials}

Here is a first characterization of Gotzmann monomials in $R_n$ in terms of 
$\mg_n(u)$ and $\mc_n(u)$. 

\begin{theorem}[\cite{B,BE}]\label{thm VB} Let $u \in S_n$. Then $u$ is a Gotzmann monomial in $R_n$ if and only if 
$$
\mg_n(u)=\mc_n(u).
$$
\end{theorem}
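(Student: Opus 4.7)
The plan is to reduce the Gotzmann property to a shadow-size identity via Gotzmann's persistence theorem, express that identity as an equality of weighted degrees of maxgen monomials, and then upgrade the weighted equality to the asserted monomial equality by a pointwise refinement of Macaulay's theorem. The upgrade step is the main obstacle.

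By Gotzmann's persistence theorem, $\vs{u}$ is a Gotzmann ideal if and only if it attains Macaulay's minimum shadow at degree $d+1$, that is, $|\shad(B(u))|=|\shad(L(\tilde{u}))|$, where $L(\tilde{u})$ is the lexsegment of cardinality $|B(u)|$; the choice $\tilde{u}=\pred^g(u)$ with $g=|\gaps_n(u)|$ makes this identification explicit. For any Borel-stable $B\subseteq S_{n,d}$, the shadow admits the disjoint decomposition $\shad(B)=\bigsqcup_{v\in B}\{x_iv:i\ge\max(v)\}$: the canonical representative of $w\in\shad(B)$ is $v=w/x_{\max(w)}$, which lies in $B$ by Borel-stability and is the only representative with $i=\max(w)\ge\max(v)$. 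Hence
\[
|\shad(B)|=\sum_{v\in B}(n+1-\max(v))=|B|(n+1)-\w(\maxgen B),
\]
where $\w(x_1^{a_1}\cdots x_n^{a_n}):=\sum_i i a_i$. Since $|B(u)|=|L(\tilde{u})|$, the Gotzmann criterion translates to $\w(\maxgen B(u))=\w(\maxgen L(\tilde{u}))$. The disjoint decompositions $L(u)=B(u)\sqcup\gaps_n(u)=L(\tilde{u})\sqcup\cogaps_n(u)$ and the multiplicativity of $\maxgen$ on disjoint unions yield the monomial identity
\[
\maxgen L(u)=\maxgen B(u)\cdot\mg_n(u)=\maxgen L(\tilde{u})\cdot\mc_n(u),
\]
so applying $\w$ and cancelling the common term produces the preliminary reformulation: $\vs{u}$ is Gotzmann if and only if $\w(\mg_n(u))=\w(\mc_n(u))$.

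The remaining step is to upgrade this equality of weighted degrees to the asserted monomial equality $\mg_n(u)=\mc_n(u)$. My plan is to prove the following pointwise refinement of Macaulay's theorem: for every index $1\le i\le n$,
\[
|\{v\in B(u):\max(v)\ge i\}|\le|\{v\in L(\tilde{u}):\max(v)\ge i\}|.
\]
Summing over $i$ recovers the weighted-degree inequality, so under the Gotzmann hypothesis all pointwise inequalities collapse to equalities. Since the monomial $\maxgen(S)$ encodes precisely the multiset $\{\max(v):v\in S\}$, level-wise equality of these counts forces $\maxgen B(u)=\maxgen L(\tilde{u})$, and substituting into the identity above yields $\mg_n(u)=\mc_n(u)$. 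The pointwise refinement is the main technical obstacle; I intend to prove it by restricting to the truncation $R_{i-1}=K[x_1,\dots,x_{i-1}]$, where $B(u)\cap R_{i-1}$ is Borel-stable in $R_{i-1}$ and $L(\tilde{u})\cap R_{i-1}$ is a lexsegment in $R_{i-1}$, and then invoking a standard compression argument showing that among Borel-stable sets of a fixed total size, the lex one minimizes every truncation size $|B\cap R_{i-1}|$.
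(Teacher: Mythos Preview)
The paper does not itself prove this theorem; it is quoted from the earlier works \cite{B,BE}, so there is no in-paper argument to compare against. I will therefore assess your proposal on its own merits.

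Your reduction is sound. The Eliahou--Kervaire decomposition $\shad(B)=\bigsqcup_{v\in B}\{x_iv:i\ge\max(v)\}$ for Borel-stable $B$ is correct and gives $|\shad(B)|=(n+1)|B|-\w(\maxgen B)$. Combined with Gotzmann persistence for the ideal $\vs{u}$ generated in the single degree $d$, this cleanly yields your preliminary criterion $\w(\mg_n(u))=\w(\mc_n(u))$. The monomial identity $\maxgen L(u)=\maxgen B(u)\cdot\mg_n(u)=\maxgen L(\tilde u)\cdot\mc_n(u)$ is also correct, and your observation that termwise inequalities summing to zero collapse to equalities is exactly the right mechanism for the upgrade.

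The gap is precisely where you place it: the pointwise refinement
\[
|\{v\in B(u):\max(v)\ge i\}|\ \le\ |\{v\in L(\tilde u):\max(v)\ge i\}|\qquad(1\le i\le n)
\]
is asserted but not proved, and your stated plan does not quite close it. Observing that $B(u)\cap R_{i-1}$ is Borel-stable in $R_{i-1}$ and that $L(\tilde u)\cap R_{i-1}$ is a lexsegment in $R_{i-1}$ is true but does not by itself compare their cardinalities, since you have no a priori relation between the sizes of these truncations; so as written the plan is circular. The statement you actually need --- that among Borel-stable subsets of $S_{n,d}$ of a fixed size the lexsegment minimizes $|B\cap S_{i-1,d}|$ for every $i$ --- is correct and is essentially the content of the extremality results of Bayer, Bigatti and Hulett on the $\max$-profile of strongly stable sets. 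A clean way to prove it is by induction on $n$ via the $x_n$-stratification $B=\bigsqcup_{k\ge0}B^{(k)}x_n^k$ with each $B^{(k)}\subseteq S_{n-1,d-k}$ Borel-stable and $B^{(k+1)}x_{n-1}\subseteq B^{(k)}$, comparing to the analogous stratification of $L$; alternatively, one may quote it directly from the cited literature. Once this lemma is in hand, your argument is complete and gives an independent proof of the theorem; as it stands, the proposal is a correct outline with the decisive combinatorial step still to be supplied.
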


In this paper, we shall mostly use the following more flexible characterization, which does not require one to first determine $\tilde{u}$ and $\mc_n(u)$ in order to apply Theorem~\ref{thm VB}.

\begin{corollary}\label{equiv gotzmann} Let $u \in S_n$. Then $u$ is a Gotzmann monomial in $R_n$ if and only if there exists $v \ge u$ such that $\mg_n(u) =\mu_n(u,v)$.
\end{corollary}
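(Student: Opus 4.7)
My plan is to treat this corollary as a convenient reformulation of Theorem~\ref{thm VB}, in which the existential $v$ is in fact forced to equal $\tilde{u}$. The forward direction is essentially a translation of notation, while the backward direction rests on a single degree-counting argument.

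For the forward implication, I would start from the hypothesis that $u$ is a Gotzmann monomial. Applying Theorem~\ref{thm VB} gives $\mg_n(u) = \mc_n(u)$. Since by Notation~\ref{nota mc} we have $\mc_n(u) = \mu_n(u, \tilde{u})$ with $\tilde{u} = \pred^g(u) \ge u$ (where $g = |\gaps_n(u)|$), it suffices to take $v = \tilde{u}$.

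For the backward implication, suppose some $v \ge u$ satisfies $\mg_n(u) = \mu_n(u,v)$. The key observation — already noted right after the definition of $\maxgen$ — is that $\deg(\maxgen(B)) = |B|$ for any finite set $B$ of monomials of common degree. Applying this to both sides of the monomial equality yields
\[
|\gaps_n(u)| \;=\; \deg(\mg_n(u)) \;=\; \deg(\mu_n(u,v)) \;=\; |L^*(v,u)|.
\]
Writing $g = |\gaps_n(u)|$, the set $L^*(v,u) = \{z \in S_{n,d} \mid v > z \ge u\}$ is a block of $g$ lex-consecutive monomials starting at $u$, namely $\{u, \pred(u), \ldots, \pred^{g-1}(u)\}$, and $v$ is pinned down as $\pred^g(u) = \tilde{u}$. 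Consequently $\mu_n(u,v) = \mu_n(u, \tilde{u}) = \mc_n(u)$, so $\mg_n(u) = \mc_n(u)$, and Theorem~\ref{thm VB} delivers the conclusion that $u$ is a Gotzmann monomial.

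I do not anticipate a genuine obstacle: the only substantive step is noticing that the degree equality collapses the existential quantifier over $v$ to the single value $\tilde{u}$. The point of stating the corollary in this flexible form is practical, as emphasized in the paper — it allows one to certify the Gotzmann property by exhibiting any $v \ge u$ with $\mg_n(u) = \mu_n(u,v)$, without having to precompute $g$ and $\tilde{u}$ in advance.
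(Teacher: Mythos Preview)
Your proof is correct and follows essentially the same route as the paper's: both directions reduce to Theorem~\ref{thm VB} via the observation that $\deg(\mg_n(u))=|\gaps_n(u)|$ and $\deg(\mu_n(u,v))=|L^*(v,u)|$, forcing $v=\pred^{g}(u)=\tilde{u}$. Your write-up is slightly more explicit about the degree-counting step, but the argument is identical in substance.
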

\begin{proof} Assume first that $u \in S_{n,d}$ is a Gotzmann monomial in $R_n$. Then $\mg_n(u)=\mc_n(u)=\mu_n(u,\tilde{u})$, where $\tilde{u}$ is introduced in Notation~\ref{nota u tilde}. Thus $\mg_n(u)=\mu_n(u,v)$ with $v=\tilde{u}$.

Conversely, assume $\mg_n(u) =\mu_n(u,v)$ for some $v \ge u$ in $S_{n}$. Let $g = \deg(\mg_n(u))=\deg(\mu_n(u,v))$. Then $u$ has $g$ gaps and the upward path from $u$ to $v$ has length $g$. That is, $v$ is the $g$th predecessor of $u$ in $S_n$, i.e. $v=\tilde{u}$ by definition of $\tilde{u}$. Thus $\mg_n(u) =\mu_n(u,\tilde{u})=\mc_n(u)$, whence $u$ is a Gotzmann monomial in $R_n$ by Theorem~\ref{thm VB}.
\end{proof}

\subsection{The method}
Corollary~\ref{equiv gotzmann} yields the following method to determine $\tau_n(u_0)$ for $u_0 \in S_{n-1}$. Using the tools developed in Section~\ref{sec mg}, one first computes $\mg_n(u_0x_n^t)$. One then computes costs $\mu_n(u_0x_n^t,v)$ of upward paths in $S_n$ starting from $u_0x_n^t$, and stop if and when such a cost is found to coincide with $\mg_n(u_0x_n^t)$. This justifies the following definition.

\begin{definition}\label{def target} Given $u_0 \in S_{n-1}$ and $t \ge 0$, we will refer to the monomial $\mg_n(u_0x_n^t) \in S_n$ as the \emph{target}. 
\end{definition}

Now, while looking to realize the target $\mg_n(u_0x_n^t)$ as the cost of upward paths in $S_n$ starting from $u_0x_n^t$, we will look to first realize truncated versions of it by ignoring the last variables.

\begin{notation} Let $n \ge i \ge 1$ be positive integers. We denote by
$$
\pi_i \colon S_n \to S_i
$$
the truncation morphism defined by $\pi_i(x_1^{a_1}\cdots x_n^{a_n}) = x_1^{a_1}\cdots x_i^{a_i}$. 
\end{notation}
By a slight abuse of notation, we may and will safely ignore the index $n$ in the notation $\pi_i$.

\begin{definition}\label{def i-target} Given $u_0 \in S_{n-1}$, $t \ge 0$ and $i \le n$, we will refer to the monomial $\pi_{i}(\mg_n(u_0x_n^t)) \in S_i$ as the \emph{$i$-truncated target}, or more shortly as the \emph{$i$-target}. 
\end{definition}

The effectiveness of the method is illustrated in Section~\ref{sec x2} with the  determination of formula~\eqref{eq formula} for the Gotzmann threshold $\tau_5(x_2^d)$.

\section{Computing $\mg_n(u)$ and $\mu_n(u,v)$}\label{sec mg}
In this Section, we develop tools enabling us to compute $\mg_n(u)$ for $u \in S_n$ and costs $\mu_n(u,v)$ of upward paths in $S_n$ starting from $u$.

\subsection{The map $\sigma_n$} The following map is used everywhere in the sequel.
\begin{notation} We denote by $\sigma_n \colon S_n \to S_n$ the map defined, for all $\displaystyle u=\prod_{i=1}^n x_i^{a_i} \in S_n$, by
$$
\sigma_n(\prod_{i=1}^n x_i^{a_i}) = \prod_{i=1}^n x_i^{a_1+\dots+a_i} = x_1^{a_1}x_2^{a_1+a_2}\cdots x_n^{a_1+a_2+\cdots+a_n}.
$$
\end{notation}

For instance, $\sigma_2(x_2)=x_2$, $\sigma_5(x_2)=x_2x_3x_4x_5$ and $\sigma_4(x_2^2x_3^5)=x_2^2x_3^7x_4^7$. 

\medskip
A useful property of $\sigma_n$ is its compatibility with the product. 
\begin{lemma}\label{sigma is morphism} Let $u,v \in S_n$. Then
$
\sigma_n(uv)=\sigma_n(u)\sigma_n(v).
$
\end{lemma}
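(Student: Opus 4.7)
The plan is to verify the identity by a direct comparison of exponents on both sides, exploiting the fact that $\sigma_n$ is induced by a linear map on exponent vectors. Concretely, I would write $u=\prod_{i=1}^n x_i^{a_i}$ and $v=\prod_{i=1}^n x_i^{b_i}$, so that $uv=\prod_{i=1}^n x_i^{a_i+b_i}$, and then unfold the definition of $\sigma_n$ on each side.

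By the definition of $\sigma_n$ applied to $uv$, the exponent of $x_i$ in $\sigma_n(uv)$ equals the partial sum
$$
\sum_{j=1}^{i}(a_j+b_j) \;=\; \sum_{j=1}^{i}a_j+\sum_{j=1}^{i}b_j.
$$
On the other hand, by the definition of $\sigma_n$ applied separately to $u$ and $v$, the exponent of $x_i$ in the product $\sigma_n(u)\sigma_n(v)$ is exactly $\sum_{j=1}^{i}a_j+\sum_{j=1}^{i}b_j$. Since this equality holds for every $i\in\{1,\dots,n\}$, the two monomials $\sigma_n(uv)$ and $\sigma_n(u)\sigma_n(v)$ coincide.

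Conceptually, the statement amounts to the observation that the map on $\N^n$ sending $(a_1,\dots,a_n)$ to the vector of partial sums $(a_1,\,a_1+a_2,\,\dots,\,a_1+\cdots+a_n)$ is additive (in fact $\Z$-linear), and that multiplication of monomials corresponds to addition of exponent vectors. There is therefore no real obstacle: the only thing to be careful about is to keep the indexing of the partial sums straight and to apply the definition of $\sigma_n$ correctly to each of $uv$, $u$ and $v$.
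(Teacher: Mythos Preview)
Your proof is correct and follows essentially the same approach as the paper: both write $u=\prod_i x_i^{a_i}$, $v=\prod_i x_i^{b_i}$, apply the definition of $\sigma_n$, and use the additivity of partial sums $\sum_{j\le i}(a_j+b_j)=\sum_{j\le i}a_j+\sum_{j\le i}b_j$ to conclude. Your added remark framing $\sigma_n$ as induced by a linear map on exponent vectors is a nice conceptual gloss, but the underlying computation is identical to the paper's.
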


\begin{proof} Set $u=\prod_{i=1}^n x_i^{a_i}$, $v=\prod_{i=1}^n x_i^{b_i}$. Then
$$
\begin{aligned}
\sigma_n(uv) & = \sigma_n(\prod_{i=1}^n x_i^{a_i+b_i})
= \prod_{i=1}^n x_i^{(a_1+b_1)+\dots+(a_i+b_i)} \\
& = \prod_{i=1}^n x_i^{(a_1+ \dots +a_i)+(b_1+\dots+b_i)}
= \sigma_n(u)\sigma_n(v). \qedhere
\end{aligned}
$$
\end{proof}

\medskip
Here is a simple recursive formula.

\begin{lemma}\label{sigma n} Let $u_0 \in S_{n-1}$. For all $t \ge 0$, we have
$$
\sigma_n(u_0x_n^{t})=\sigma_{n-1}(u_0)\cdot x_n^{t+\deg(u_0)}.
$$
\end{lemma}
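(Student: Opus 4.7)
The plan is to carry out a direct computation from the definition of $\sigma_n$ given just above the statement. Writing $u_0 = \prod_{i=1}^{n-1} x_i^{a_i}$, the monomial $u_0x_n^t$ has exponent vector $(a_1,\dots,a_{n-1},t)$ in $S_n$, so by definition
$$
\sigma_n(u_0x_n^t) \;=\; \prod_{i=1}^{n-1} x_i^{a_1+\cdots+a_i} \cdot x_n^{a_1+\cdots+a_{n-1}+t}.
$$
I then recognise the initial product as $\sigma_{n-1}(u_0)$ and rewrite the final exponent as $\deg(u_0)+t$, which yields the claimed identity. This is essentially a one-line unfolding of the definition.

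As a cross-check and alternative route, I would note that Lemma~\ref{sigma is morphism} gives $\sigma_n(u_0x_n^t) = \sigma_n(u_0)\sigma_n(x_n^t)$. Since $\sigma_n(x_n^t) = x_n^t$ by direct inspection, and since viewing $u_0 \in S_{n-1}$ as an element of $S_n$ with vanishing $x_n$-exponent yields $\sigma_n(u_0) = \sigma_{n-1}(u_0)\cdot x_n^{\deg(u_0)}$, the two lines combine to the same formula. Either presentation suffices.

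There is no genuine obstacle: the statement is a direct consequence of the definition of $\sigma_n$, and the only thing to be careful about is the bookkeeping of partial sums of exponents and the observation that the partial sum at index $n$ equals $\deg(u_0)+t$. I would keep the proof to two or three lines using Approach 1, as it is both the shortest and the most transparent.
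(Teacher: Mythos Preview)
Your proposal is correct and takes essentially the same approach as the paper: the paper's proof is precisely your Approach~1, unfolding the definition of $\sigma_n$ and noting that the exponent of $x_n$ is $a_1+\cdots+a_{n-1}+t=\deg(u_0)+t$. Your alternative via Lemma~\ref{sigma is morphism} is a valid cross-check but not needed.
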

\begin{proof} Follows from the definition of $\sigma_n$, noting that if $u_0=x_1^{a_1}\cdots x_{n-1}^{a_{n-1}}$, then
$$
{\deg(u_0)+t} = a_1+\cdots+a_{n-1}+t.
$$
\end{proof}

\medskip
We will also need to apply $\sigma_n$ iteratively. Here is the result.

\begin{proposition} Let $u=\prod_{i=1}^n x_i^{a_i}$. Then for all $t \ge 1$, one has
\begin{equation}\label{sigma^t}
\sigma_n^t(u) = \prod_{i=1}^{n} x_i^{\sum_{j=1}^i a_j\binom{t-1+i-j}{t-1}}.
\end{equation}
\end{proposition}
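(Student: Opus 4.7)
I would prove the formula by induction on $t$, with the base case following directly from the definition of $\sigma_n$ and the inductive step reducing to the classical hockey-stick identity for binomial coefficients.

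For the base case $t=1$, the exponent of $x_i$ on the right-hand side of \eqref{sigma^t} becomes $\sum_{j=1}^i a_j \binom{i-j}{0} = \sum_{j=1}^i a_j = a_1+\cdots+a_i$, which matches the definition of $\sigma_n(u)$.

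For the inductive step, suppose \eqref{sigma^t} holds for some $t \ge 1$, and write $\sigma_n^t(u) = \prod_{i=1}^n x_i^{b_i}$ where $b_i = \sum_{j=1}^i a_j \binom{t-1+i-j}{t-1}$. Since $\sigma_n^{t+1}(u) = \sigma_n(\sigma_n^t(u))$, the exponent of $x_i$ in $\sigma_n^{t+1}(u)$ equals $\sum_{k=1}^i b_k$. Substituting and swapping the order of summation yields
$$
\sum_{k=1}^i b_k \;=\; \sum_{k=1}^i \sum_{j=1}^k a_j \binom{t-1+k-j}{t-1} \;=\; \sum_{j=1}^i a_j \sum_{k=j}^i \binom{t-1+k-j}{t-1}.
$$
Reindexing the inner sum by $m = k-j$ gives $\sum_{m=0}^{i-j}\binom{t-1+m}{t-1}$, which by the hockey-stick identity equals $\binom{t+i-j}{t}$. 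Therefore the exponent of $x_i$ in $\sigma_n^{t+1}(u)$ is $\sum_{j=1}^i a_j \binom{t+i-j}{t}$, which is precisely the formula \eqref{sigma^t} with $t$ replaced by $t+1$. This completes the induction.

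There is no real obstacle here: the only non-trivial ingredient is recognizing the hockey-stick identity after the standard swap of summation order. Lemma~\ref{sigma is morphism} is not needed for this proof, since we work directly with exponent vectors; the argument is essentially a manipulation of a telescoping convolution of binomial coefficients, reflecting the fact that the iterate $\sigma_n^t$ acts as the $t$-fold cumulative-sum operator on exponents.
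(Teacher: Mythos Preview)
Your proof is correct and follows essentially the same approach as the paper: induction on $t$, with the base case immediate from the definition of $\sigma_n$, and the inductive step carried out by swapping the order of summation and then applying the hockey-stick identity $\sum_{m=0}^{N}\binom{d+m}{d}=\binom{d+1+N}{d+1}$. The only differences are cosmetic (choice of index names); your remark that Lemma~\ref{sigma is morphism} is not needed here is also consistent with the paper's argument, which likewise works directly with exponent vectors.
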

\begin{proof} By induction on $t$, using the recurrence relations of the binomial coefficients. For $t = 1$ we have $\sum_{j=1}^i a_j\binom{i-j}{0}=a_1+\dots+a_i$, as desired.
Assume now the formula true for some $t \ge 1$. Let us prove it for $t+1$. Set $b_i=\sum_{j=1}^i a_j\binom{t-1+i-j}{t-1}$, so that $\sigma_n^t(u)=\prod_{i=1}^{n} x_i^{b_i}$ by assumption. Now $\sigma_n^{t+1}(u)=\sigma_n(\sigma_n^t(u))=\sigma_n(\prod_{i=1}^{n} x_i^{b_i})=\prod_{i=1}^{n} x_i^{\sum_{j=1}^i b_j}$. For all $i$, set $c_i=\sum_{j=1}^i b_j$. We will show that $c_i=\sum_{j=1}^i a_j\binom{t+i-j}{t}$, thereby concluding the proof. We have
$$
\begin{aligned}
c_i &= \sum_{j=1}^i\left(\sum_{r=1}^j a_r\binom{t-1+j-r}{t-1}\right) \\
&= \sum_{r=1}^i a_r\sum_{j=r}^i\binom{t-1+j-r}{t-1}
\end{aligned}
$$
Setting $k=j-r$, we obtain
$$
c_i = \sum_{r=1}^i a_r \sum_{k=0}^{i-r}\binom{t-1+k}{t-1}.
$$
Using the well-known binomial formula
$$
\sum_{k=0}^N \binom{d+k}{d} = \binom{d+1+N}{d+1},
$$
and setting $r=j$, we obtain
$$
c_i= \sum_{j=1}^i a_j\binom{t+i-j}{t},
$$
as desired.
\end{proof}

For convenience, here is formula~\eqref{sigma^t} in expanded form:
\begin{equation}
\sigma_n^t(x_1^{a_1}\cdots x_n^{a_n}) = x_1^{a_1}x_2^{a_1t+a_2}x_3^{a_1\binom{t+1}{2}+a_2t+a_3}x_4^{a_1\binom{t+2}{3}+a_2\binom{t+1}{2}+a_3t+a_4}\cdots
\end{equation}

\medskip
The following result was first stated as Corollary 6.9 in~\cite{BE}.

\begin{proposition}\label{cor 6.9} Let $u \in S_n$. Then
$$
\mg_n(ux_n) = \sigma_n(\mg_n(u)).
$$
\end{proposition}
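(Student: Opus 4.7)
The plan is to establish an explicit bijection between $\gaps_n(ux_n)$ and the set of pairs $\{(v,m) : v \in \gaps_n(u),\ \max(v) \le m \le n\}$, given by $w \mapsto (w/x_{\max(w)},\, \max(w))$, and then read off the identity by counting. Let $d = \deg(u)$.

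The heart of the matter is to show, for every $w \in S_{n,d+1}$ and with the notation $m = \max(w)$, $v = w/x_m \in S_{n,d}$, the two equivalences
\begin{equation*}
w \ge u x_n \iff v \ge u \qquad\text{and}\qquad w \in B(u x_n) \iff v \in B(u).
\end{equation*}
The first is a short case analysis resting on the elementary fact that if $v,u \in S_{n,d}$ with $v \ne u$ first differ at index $j$, then a degree comparison forces $\max(v) > j$; consequently any $m \ge \max(v)$ satisfies $m > j$, and the exponent vectors of $vx_m$ and $ux_n$ first differ at the same index $j$ with $(vx_m)_j = v_j$ and $(ux_n)_j = u_j$, so that $vx_m$ and $ux_n$ compare the same way as $v$ and $u$. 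For the Borel equivalence I would invoke the standard characterization $B(u') = \{v' \in S_{n,e} : \sum_{i \le k} v'_i \ge \sum_{i \le k} u'_i \text{ for all } k\}$. Applied to $(w,ux_n)$, the inequalities for $k \ge m$ are automatic because $\sum_{i \le k} w_i = d+1 > d \ge \sum_{i \le k} u_i$, whereas for $k < m$ they read $\sum_{i \le k} v_i \ge \sum_{i \le k} u_i$, which are precisely the defining inequalities for $v \in B(u)$.

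Subtracting, $w \in \gaps_n(ux_n) \iff v \in \gaps_n(u)$, and the constraint $m \ge \max(v)$ on the bijection is automatic from $m = \max(vx_m)$. Writing $g_j = |\{v \in \gaps_n(u) : \max(v) = j\}|$, the definition of maxgen gives $\mg_n(u) = \prod_j x_j^{g_j}$; and the exponent of $x_k$ in $\mg_n(ux_n) = \prod_{w \in \gaps_n(ux_n)} x_{\max(w)}$ counts pairs $(v,m)$ with $m = k$, namely $\sum_{j \le k} g_j$. By the very definition of $\sigma_n$, $\sigma_n(\prod_j x_j^{g_j}) = \prod_k x_k^{g_1 + \cdots + g_k}$, so the two monomials coincide.

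The main obstacle is the Borel equivalence between $w$ and $v$: it requires either citing the partial-sum description of the Borel closure or else tracking Borel moves through multiplication by $x_n$ directly. Once this ingredient is in place, everything else is a mechanical bookkeeping with the action of $\sigma_n$ on the exponent vector of $\mg_n(u)$.
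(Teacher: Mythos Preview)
Your argument is correct and self-contained; the paper itself gives no proof of this proposition, merely citing it as Corollary~6.9 of~\cite{BE}. Your bijection $w \mapsto (w/x_{\max(w)},\max(w))$ between $\gaps_n(ux_n)$ and $\{(v,m): v \in \gaps_n(u),\ \max(v)\le m \le n\}$ is exactly what is needed, and the partial-sum characterization of $B(u)$ handles the Borel half cleanly.

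One small imprecision: your claim that ``a degree comparison forces $\max(v) > j$'' is only valid in the case $v < u$. When $v > u$ one has instead $\max(u) > j$, and it can happen that $\max(v) = j$ (e.g.\ $v = x_j^d$), whence $m = j$ is possible and then $(vx_m)_j = v_j + 1 \ne v_j$. This does not break the conclusion, since $v_j + 1 > u_j$ still gives $vx_m > ux_n$; but the sentence ``$(vx_m)_j = v_j$'' should be adjusted to handle this boundary case. With that tweak the lex equivalence $vx_m \ge ux_n \iff v \ge u$ goes through exactly as you intend.
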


\begin{corollary}\label{cor mg t} Let $u \in S_{n}$. For all $t \ge 0$, we have $$\mg_n(ux_n^t)=\sigma_n^t(\mg_n(u)).$$
\end{corollary}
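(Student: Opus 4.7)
The plan is to prove this by straightforward induction on $t$, using Proposition~\ref{cor 6.9} as the main engine. There is essentially no obstacle here: the statement is really just the iterated form of the one-step identity $\mg_n(ux_n)=\sigma_n(\mg_n(u))$, and the only things to check are the base case and the compatibility of iteration with the recursion.

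For the base case $t=0$, both sides equal $\mg_n(u)$ since $\sigma_n^0$ is the identity map on $S_n$, so the equality $\mg_n(ux_n^0)=\sigma_n^0(\mg_n(u))$ is immediate.

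For the inductive step, assume the identity $\mg_n(ux_n^t)=\sigma_n^t(\mg_n(u))$ holds for some $t\ge 0$. Writing $ux_n^{t+1}=(ux_n^t)x_n$ and applying Proposition~\ref{cor 6.9} to the monomial $ux_n^t \in S_n$, we obtain
$$
\mg_n(ux_n^{t+1}) \;=\; \mg_n((ux_n^t)x_n) \;=\; \sigma_n(\mg_n(ux_n^t)).
$$
Substituting the induction hypothesis into the right-hand side gives
$$
\mg_n(ux_n^{t+1}) \;=\; \sigma_n\bigl(\sigma_n^t(\mg_n(u))\bigr) \;=\; \sigma_n^{t+1}(\mg_n(u)),
$$
which closes the induction and yields the claimed formula for all $t\ge 0$. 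The whole argument is three lines once the base case is noted, so the only care needed is to ensure Proposition~\ref{cor 6.9} is applied to $ux_n^t$ rather than $u$ itself.
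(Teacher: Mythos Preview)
Your proof is correct and follows exactly the approach implicit in the paper: the corollary is stated there without an explicit proof, as it is the immediate iteration of Proposition~\ref{cor 6.9} by induction on $t$. Nothing more is needed.
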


\subsection{Comparing maxgen monomials in $S_n$ and $S_{n-1}$}\label{sec maxgen}
The following result is Theorem 5.19 in~\cite{BE}. See Notation~\ref{nota gaps} for the set $B(u)$.

\begin{theorem}\label{thm 5.19} Let $u = x_{i_1}\cdots x_{i_d}$ with $i_1 \le \dots \le i_d \le n$. Then
$$
mg_n(u) \ = \ \prod_{k=1}^{d-1}\left( \prod_{j=i_{k+1}+1}^n x_j^{\binom{d-k-2+j-i_{k+1}}{d-k-1}}\right)^{|B(x_{i_1}\cdots x_{i_k})|-1},
$$
where the internal product is set to 1 if $i_{k+1}=n$.
\end{theorem}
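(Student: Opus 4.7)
The plan is to prove the formula by directly enumerating $\gaps_n(u) = L_n(u) \setminus B(u)$, partitioning it according to a natural combinatorial invariant, and matching the resulting maxgen with the right-hand side of the identity factor by factor. First I recall two elementary characterizations: if $z = x_{j_1}\cdots x_{j_d}$ with $j_1 \le \cdots \le j_d$ and $u = x_{i_1}\cdots x_{i_d}$ with $i_1 \le \cdots \le i_d$, then $z \in B(u)$ iff $j_\ell \le i_\ell$ for every $\ell$; and $z \ge u$ in $S_{n,d}$ is equivalent to $(j_1,\dots,j_d) \le (i_1,\dots,i_d)$ in standard lex order on sorted index sequences (since in the paper's convention $x_1^d$ is the lex-largest monomial, smaller indices correspond to larger monomials). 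With these I define the invariant
\[
k(z) \;=\; \min\{\ell : j_\ell > i_\ell\}-1 \qquad \text{for } z \in \gaps_n(u).
\]
A quick check shows $k(z) \ge 1$: if $k(z) = 0$ then $j_1 > i_1$, so the exponent of $x_{i_1}$ in $z$ vanishes while in $u$ it is positive, forcing $z < u$. Hence $k(z) \in \{1,\dots,d-1\}$ and $\gaps_n(u)$ decomposes disjointly according to this invariant.

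For each fixed $k \in \{1,\dots,d-1\}$, I would establish the bijection
\[
\bigl\{z \in \gaps_n(u) : k(z) = k\bigr\} \;\longleftrightarrow\; \bigl(B(u_k)\setminus\{u_k\}\bigr) \times M_{k,d},
\]
where $u_k = x_{i_1}\cdots x_{i_k}$ and $M_{k,d}$ is the set of degree-$(d-k)$ monomials in the variables $x_{i_{k+1}+1},\dots,x_n$. The map sends $z$ to the pair $(z',z'')$ with $z' = x_{j_1}\cdots x_{j_k}$ and $z'' = x_{j_{k+1}}\cdots x_{j_d}$. Well-definedness rests on three points: $z' \in B(u_k)$ by definition of $k(z)$; $j_{k+1} > i_{k+1}$ forces $z'' \in M_{k,d}$; and the chain $j_k \le i_k \le i_{k+1} < j_{k+1}$ ensures $z'z''$ remains sorted and that the two factors involve disjoint variables. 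The subtle point is that the condition $z \ge u$ is equivalent to $z' \ne u_k$: if $z' = u_k$ the sorted sequences agree up through position $k$ and then $j_{k+1} > i_{k+1}$ forces $z < u$, whereas any strict inequality $j_\ell < i_\ell$ for some $\ell \le k$ makes the first disagreement of the sequences occur at that $\ell$ with $j_\ell < i_\ell$, yielding $z > u$. The inverse map is clear, completing the bijection.

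Since $j_k < j_{k+1}$, one has $\lambda(z) = \lambda(z'')$ for every $z$ in the $k$-th stratum, so
\[
\maxgen\bigl\{z \in \gaps_n(u) : k(z) = k\bigr\} \;=\; \maxgen(M_{k,d})^{|B(u_k)|-1}.
\]
A direct count then shows $\maxgen(M_{k,d}) = \prod_{j=i_{k+1}+1}^n x_j^{\binom{d-k-2+j-i_{k+1}}{d-k-1}}$: the number of degree-$(d-k)$ monomials in $x_{i_{k+1}+1},\dots,x_n$ with maximum variable $x_j$ equals the number of degree-$(d-k-1)$ monomials in $x_{i_{k+1}+1},\dots,x_j$, which gives the stated binomial. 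This is exactly the inner product $T_k$ in the theorem, and multiplying the contributions over $k=1,\dots,d-1$ yields the formula. The convention $T_k=1$ when $i_{k+1}=n$ is automatic since then $M_{k,d}=\emptyset$.

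The main technical hurdle I anticipate is the careful translation of the lex condition $z \ge u$ into the combinatorial condition $z' \ne u_k$; this requires tracking the first disagreement between sorted index sequences in the opposite direction to that suggested by the exponent-vector lex order, and a separate treatment of the boundary case $z' = u_k$. Once this equivalence is pinned down the bijection is routine and the rest of the proof reduces to elementary binomial counting.
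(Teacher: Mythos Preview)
The paper does not actually prove this statement: it is quoted verbatim as Theorem~5.19 of \cite{BE} and used as a black box, so there is no in-paper proof to compare against.

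That said, your argument is correct and self-contained. The stratification of $\gaps_n(u)$ by the invariant $k(z)=\min\{\ell:j_\ell>i_\ell\}-1$ is the natural one, and your bijection between the $k$-th stratum and $(B(u_k)\setminus\{u_k\})\times M_{k,d}$ goes through exactly as you describe. The only place requiring care is the equivalence ``$z\ge u$ iff $z'\ne u_k$'', and your analysis of the first disagreement in the sorted index sequences handles both directions cleanly: if $z'=u_k$ the first disagreement is at position $k+1$ with $j_{k+1}>i_{k+1}$, forcing $z<u$; if $z'\ne u_k$ then the first position $m$ where the sequences differ satisfies $m\le k$ and $j_m<i_m$ (since $j_\ell\le i_\ell$ for all $\ell\le k$), forcing $z>u$. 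The identification $\lambda(z)=\lambda(z'')$ via $j_k\le i_k\le i_{k+1}<j_{k+1}$ and the binomial count of $\maxgen(M_{k,d})$ are both routine and correct, including the edge case $i_{k+1}=n$ giving $M_{k,d}=\emptyset$.
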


\begin{proposition}\label{prop pi n-1} Let $u_0 \in S_{n-1}$. Then
$$
\pi_{n-1}(\mg_n(u_0)) = \mg_{n-1}(u_0).
$$
\end{proposition}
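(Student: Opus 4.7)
The plan is to derive the identity directly from the closed formula for $\mg_n(u)$ given in Theorem~\ref{thm 5.19}, by tracking what survives the truncation $\pi_{n-1}$ that discards the $x_n$-factor.

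First, I would write $u_0 = x_{i_1}\cdots x_{i_d}$ with $i_1 \le \cdots \le i_d$, and emphasise that since $u_0 \in S_{n-1}$, every $i_k$ satisfies $i_k \le n-1$. Then I would apply Theorem~\ref{thm 5.19} to obtain
$$
\mg_n(u_0) \;=\; \prod_{k=1}^{d-1}\left(\prod_{j=i_{k+1}+1}^{n} x_j^{\binom{d-k-2+j-i_{k+1}}{d-k-1}}\right)^{|B(x_{i_1}\cdots x_{i_k})|-1}.
$$
Applying $\pi_{n-1}$ kills all $x_n$-factors, so only the terms with $j \le n-1$ remain, yielding
$$
\pi_{n-1}(\mg_n(u_0)) \;=\; \prod_{k=1}^{d-1}\left(\prod_{j=i_{k+1}+1}^{n-1} x_j^{\binom{d-k-2+j-i_{k+1}}{d-k-1}}\right)^{|B(x_{i_1}\cdots x_{i_k})|-1},
$$
with the inner product equal to $1$ when $i_{k+1}=n-1$ (and already equal to $1$ in the ambient ring $R_n$ precisely when $i_{k+1}=n$, a case which cannot occur here).

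Next I would verify the key compatibility that the sets $B(x_{i_1}\cdots x_{i_k})$ appearing in the exponents may be computed equally well in $R_n$ or in $R_{n-1}$. Indeed, the Borel moves $v \mapsto vx_i/x_j$ with $i\le j$ can only \emph{decrease} variable indices, so starting from a monomial involving only $x_1,\dots,x_{n-1}$ one never produces $x_n$. Hence $B(x_{i_1}\cdots x_{i_k})$ is intrinsic to the variables actually present, and in particular its cardinality is the same in $R_n$ and in $R_{n-1}$.

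Finally, comparing the displayed expression for $\pi_{n-1}(\mg_n(u_0))$ with the formula obtained by applying Theorem~\ref{thm 5.19} directly to $u_0$ inside $R_{n-1}$ (which gives exactly the same product, with upper limit $n-1$ and the convention that the inner product is $1$ when $i_{k+1}=n-1$), we conclude $\pi_{n-1}(\mg_n(u_0)) = \mg_{n-1}(u_0)$. There is no real obstacle: the only thing to watch is the edge case $i_{k+1}=n-1$, where both sides correctly yield an empty product, and the invariance of $|B(\cdot)|$ under the embedding $R_{n-1} \hookrightarrow R_n$, which follows immediately from the direction of Borel moves.
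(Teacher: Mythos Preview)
Your proof is correct and follows essentially the same route as the paper: both apply Theorem~\ref{thm 5.19} to $u_0$ in $R_n$, split off the $x_n$-contribution from the inner product, and recognise the remaining product as the formula for $\mg_{n-1}(u_0)$. Your explicit remark that $|B(x_{i_1}\cdots x_{i_k})|$ is the same whether computed in $R_{n-1}$ or $R_n$ is a point the paper leaves implicit, but otherwise the arguments coincide.
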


\begin{proof} Write $u=x_{i_1}\cdots x_{i_d}$ with $i_1 \le \dots \le i_d \le n-1$. By Theorem~\ref{thm 5.19} above, we have
$$
\mg_n(u) \ = \ \prod_{k=1}^{d-1}\left( \prod_{j=i_{k+1}+1}^n x_j^{\binom{d-k-2+j-i_{k+1}}{d-k-1}}\right)^{|B(x_{i_1}\cdots x_{i_k})|-1}.
$$
Let us now isolate the variable $x_n$. This yields
$$
\mg_n(u) \ = \ \prod_{k=1}^{d-1}\left( \prod_{j=i_{k+1}+1}^{n-1} x_j^{\binom{d-k-2+j-i_{k+1}}{d-k-1}} \cdot x_n^{\binom{d-k-2+n-i_{k+1}}{d-k-1}}\right)^{|B(x_{i_1}\cdots x_{i_k})|-1}.
$$
Again by Theorem~\ref{thm 5.19}, the left part computes $\mg_{n-1}(u_0)$, i.e.
$$
\mg_{n-1}(u_0) \ = \ \prod_{k=1}^{d-1}\left( \prod_{j=i_{k+1}+1}^{n-1} x_j^{\binom{d-k-2+j-i_{k+1}}{d-k-1}}\right)^{|B(x_{i_1}\cdots x_{i_k})|-1}.
$$
Summarizing, we have
\begin{equation}\label{exponent of xn}
\mg_{n}(u) \ = \ \mg_{n-1}(u_0) \cdot x_n^{\sum_{k=1}^{d-1} \binom{d-k-2+n-i_{k+1}}{d-k-1} \cdot (|B(x_{i_1}\cdots x_{i_k})|-1)}.
\end{equation}
This concludes the proof of the proposition.
\end{proof}

\begin{corollary}\label{cor pi mgn} Let $u_0 \in S_{n-1}$. For all $t \in \N$, we have
$$
\pi_{n-1}(\mg_n(u_0x_n^t)) = \sigma_{n-1}^t(\mg_{n-1}(u_0)) = \mg_{n-1}(u_0x_{n-1}^t).
$$
\end{corollary}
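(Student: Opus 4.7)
The plan is to derive the chain of equalities by combining three already-established results: Corollary~\ref{cor mg t} (which converts powers of $x_n$ into iterated applications of $\sigma_n$), Proposition~\ref{prop pi n-1} (which commutes $\pi_{n-1}$ past $\mg_n$), and a simple intertwining relation between $\pi_{n-1}$ and the maps $\sigma_n$, $\sigma_{n-1}$.

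First I would prove the key intertwining identity
\begin{equation*}
\pi_{n-1} \circ \sigma_n \;=\; \sigma_{n-1} \circ \pi_{n-1}
\end{equation*}
as maps from $S_n$ to $S_{n-1}$. For $w = x_1^{a_1}\cdots x_n^{a_n}$, the definition of $\sigma_n$ gives $\sigma_n(w) = x_1^{a_1}x_2^{a_1+a_2}\cdots x_n^{a_1+\cdots+a_n}$, and truncating at $x_{n-1}$ yields $x_1^{a_1}\cdots x_{n-1}^{a_1+\cdots+a_{n-1}}$, which is exactly $\sigma_{n-1}(\pi_{n-1}(w))$ since the first $n-1$ partial sums of the exponents of $w$ coincide with those of $\pi_{n-1}(w)$. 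By a trivial induction on $t$, this gives $\pi_{n-1}\circ\sigma_n^t = \sigma_{n-1}^t\circ\pi_{n-1}$ for every $t \ge 0$.

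Then I would assemble the two claimed equalities as follows. For the first,
\begin{equation*}
\pi_{n-1}(\mg_n(u_0x_n^t)) \;=\; \pi_{n-1}(\sigma_n^t(\mg_n(u_0))) \;=\; \sigma_{n-1}^t(\pi_{n-1}(\mg_n(u_0))) \;=\; \sigma_{n-1}^t(\mg_{n-1}(u_0)),
\end{equation*}
where the first equality is Corollary~\ref{cor mg t} applied in $S_n$, the second is the intertwining identity above, and the third is Proposition~\ref{prop pi n-1}. For the second equality, Corollary~\ref{cor mg t} applied this time in $S_{n-1}$ gives $\sigma_{n-1}^t(\mg_{n-1}(u_0)) = \mg_{n-1}(u_0 x_{n-1}^t)$, completing the proof.

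There is no real obstacle here; the whole argument is a formal diagram chase. The only content is the observation that truncation from $S_n$ to $S_{n-1}$ is compatible with $\sigma$ because the $i$th exponent of $\sigma_n(w)$ depends only on $a_1,\ldots,a_i$, and hence is unaffected by the value of $a_n$. Everything else is a direct citation of previously stated results.
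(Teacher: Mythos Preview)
Your proof is correct and follows essentially the same approach as the paper's one-line proof, which just says ``Follows from Proposition~\ref{prop pi n-1} and induction on $t$ using the formula $\mg_n(ux_n)=\sigma_n(\mg_n(u))$.'' You have simply unpacked that induction: the intertwining identity $\pi_{n-1}\circ\sigma_n=\sigma_{n-1}\circ\pi_{n-1}$ is exactly the step the paper leaves implicit, and your use of Corollary~\ref{cor mg t} is the iterated form of Proposition~\ref{cor 6.9}.
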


\begin{proof} Follows from Proposition~\ref{prop pi n-1} and induction on $t$ using the formula $\mg_n(ux_n)=\sigma_n(\mg_n(u))$ for any $u \in S_n$.
\end{proof}

\subsection{The polynomial $f(t)$}\label{sec f(t)}
In this section, to a given monomial $u_0 \in S_{n-1}$ with $n \ge 3$, we attach a polynomial $f(t)$ which will be involved in our main result towards determining $\tau_n(u_0)$. 

\begin{notation}\label{nota w(t)} Let $u_0 \in S_{n-1}$. For $t \in \N$, we denote by $f(t) \in \N$ the degree in $x_n$ of the monomial $\mg_n(u_0x_n^t) \in S_n$, i.e.
$$
f(t) = \deg_{x_n}(\mg_n(u_0x_n^t)).
$$
\end{notation}
Let us denote 
\begin{equation}\label{eq w(t)}
w(t)=\pi_{n-1}(\mg_n(u_0x_n^t)) \in S_{n-1}.
\end{equation}
Recall from Definitions~\ref{def target} and~\ref{def i-target} that $\mg_n(u_0x_n^t)$ and its truncation $w(t) \in S_{n-1}$ are called the \emph{target} and the \emph{$(n-1)$-target}, respectively.

It follows from the above corollary that
$$
w(t)=\mg_{n-1}(u_0x_{n-1}^t),
$$
whence the decomposition
\begin{equation}\label{eq mg(n-1)}
\mg_n(u_0x_n^t) = \mg_{n-1}(u_0x_{n-1}^t) x_n^{f(t)} = w(t)x_n^{f(t)}.
\end{equation}

\medskip
The proof of Proposition~\ref{prop pi n-1} yields an explicit formula for $f(t)$. 

\begin{theorem}\label{thm f(t)} Let $u_0=x_{i_1}\dots x_{i_r} \in S_{n-1}$ with $i_1 \le \dots \le i_r \le n-1$ and $n \ge 3$. For $t \ge 0$, set as above $\mg_n(u_0x_n^t) = w(t)x_n^{f(t)}$. Then
$$
f(t) = \sum_{k=1}^{r-1} \binom{t+r-k-2+n-i_{k+1}}{n-1-i_{k+1}} \cdot (|B(x_{i_1}\cdots x_{i_k})|-1).
$$
\end{theorem}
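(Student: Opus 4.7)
The plan is to apply Theorem~\ref{thm 5.19} directly to $u=u_0x_n^t$, isolate the exponent of $x_n$, and then rewrite the resulting binomial coefficient via the standard symmetry $\binom{M}{a}=\binom{M}{M-a}$. This parallels the derivation of equation~\eqref{exponent of xn} in the proof of Proposition~\ref{prop pi n-1}, which isolated the $x_n$-exponent when the input lies in $S_{n-1}$; the same bookkeeping applies here, with the added $x_n$ factors at the tail.

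First, I would rewrite $u=u_0x_n^t=x_{j_1}\cdots x_{j_d}$ with $d=r+t$, setting $j_k=i_k$ for $1\le k\le r$ and $j_k=n$ for $r+1\le k\le d$. In particular $x_{j_1}\cdots x_{j_k}=x_{i_1}\cdots x_{i_k}$ for all $k\le r$. Theorem~\ref{thm 5.19} then gives
$$
\mg_n(u)=\prod_{k=1}^{d-1}\left(\prod_{j=j_{k+1}+1}^{n} x_j^{\binom{d-k-2+j-j_{k+1}}{d-k-1}}\right)^{|B(x_{j_1}\cdots x_{j_k})|-1}.
$$

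Next, I would extract $\deg_{x_n}$ of the right-hand side. The inner product over $j$ contributes a power of $x_n$ only when $j_{k+1}<n$, that is, when $k+1\le r$, i.e.\ $k\le r-1$; for $k\ge r$ one has $j_{k+1}=n$, the inner range is empty, and the $k$-th factor contributes nothing to the $x_n$-exponent. For $1\le k\le r-1$, using $j_{k+1}=i_{k+1}$, the $x_n$-contribution is exactly
$$
\binom{d-k-2+n-i_{k+1}}{d-k-1}\bigl(|B(x_{i_1}\cdots x_{i_k})|-1\bigr),
$$
and substituting $d=r+t$ gives upper entry $t+r-k-2+n-i_{k+1}$ and lower entry $t+r-k-1$.

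Finally, applying the binomial symmetry $\binom{M}{a}=\binom{M}{M-a}$ with $M=t+r-k-2+n-i_{k+1}$ and $a=t+r-k-1$ yields $M-a=n-1-i_{k+1}$, which rewrites the exponent in the form stated in the theorem. The whole argument is a direct specialization of Theorem~\ref{thm 5.19}; the main — and essentially only — obstacle is the careful bookkeeping identifying which indices $k$ actually produce an $x_n$ factor, the rest being a cosmetic rewrite of the binomial coefficient via its symmetry.
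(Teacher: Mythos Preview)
Your proposal is correct and follows essentially the same approach as the paper: apply Theorem~\ref{thm 5.19} to $u=u_0x_n^t$, observe that the factors with $j_{k+1}=n$ contribute nothing, and rewrite the resulting binomial coefficient via $\binom{a}{b}=\binom{a}{a-b}$. The paper's own proof is a two-line reference to formula~\eqref{exponent of xn} and the binomial symmetry; your version simply makes explicit the bookkeeping that truncates the sum at $k=r-1$.
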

\begin{proof} We have $\deg(u_0)=r$. Fixing $t$, let $u=u_0x_n^t$ and $d=\deg(u)=r+t$. Formula~\eqref{exponent of xn} and the binomial formula $\binom{a}{b}=\binom{a}{a-b}$ directly yield the stated result.
\end{proof}

\begin{corollary} Fixing $n \ge 3$ and $u_0\in S_{n-1}$ not divisible by $x_1$, the above corresponding function $f(t)$ is a polynomial in $t$ of degree at most $n-3$.
\end{corollary}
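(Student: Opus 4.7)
The plan is to read off the result directly from the explicit formula provided by Theorem~\ref{thm f(t)}, by examining how $t$ enters each summand and using the hypothesis $x_1 \nmid u_0$ to bound the degree in $t$ uniformly.

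First, I would recall the formula
$$
f(t) \;=\; \sum_{k=1}^{r-1} \binom{t+r-k-2+n-i_{k+1}}{n-1-i_{k+1}} \cdot \bigl(|B(x_{i_1}\cdots x_{i_k})|-1\bigr),
$$
and observe that the factors $|B(x_{i_1}\cdots x_{i_k})|-1$ are constants with respect to $t$. Hence $f(t)$ is a finite $\mathbb{Z}$-linear combination of expressions of the form $\binom{t+c_k}{d_k}$, where $c_k = r-k-2+n-i_{k+1}$ and $d_k = n-1-i_{k+1}$ depend only on $n$ and $u_0$.

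Next, I would use the standard fact that for fixed nonnegative integers $c$ and $d$, the expression $\binom{t+c}{d}$ is a polynomial in $t$ of degree exactly $d$ (with rational coefficients, taking integer values). Thus each summand contributes a polynomial in $t$ of degree $d_k = n-1-i_{k+1}$, and $f(t)$ is a polynomial in $t$ of degree at most $\max_{1 \le k \le r-1}\,(n-1-i_{k+1})$.

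The only remaining step is to invoke the hypothesis $x_1 \nmid u_0$, which forces $i_1 \ge 2$ and therefore $i_{k+1} \ge i_1 \ge 2$ for every $k \ge 0$ in the range of the sum. This yields the uniform bound $n-1-i_{k+1} \le n-3$, and hence $\deg_t f(t) \le n-3$, as claimed. (In the degenerate case $r=1$ the sum is empty and $f(t)=0$, trivially of degree at most $n-3$.) No step is really an obstacle here; the whole argument is a direct inspection of the formula in Theorem~\ref{thm f(t)}, with the role of the assumption $x_1 \nmid u_0$ being simply to guarantee $i_{k+1} \ge 2$ throughout the sum.
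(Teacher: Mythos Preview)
Your proof is correct and follows essentially the same approach as the paper: both argue directly from the explicit formula of Theorem~\ref{thm f(t)}, noting that each summand $\binom{t+c_k}{d_k}$ is a polynomial in $t$ of degree $d_k=n-1-i_{k+1}$ and that the hypothesis $x_1\nmid u_0$ forces $i_{k+1}\ge 2$, whence $d_k\le n-3$. Your version is slightly more detailed (you spell out the polynomiality of the binomial coefficient and handle the edge case $r=1$), and contains a harmless slip where you write ``for every $k\ge 0$ in the range of the sum'' when the range is $1\le k\le r-1$.
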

\begin{proof} Writing $u_0=x_{i_1}\cdots x_{i_r}$ with $2 \le i_1 \le \dots \le i_r \le n-1$, the above formula for $f(t)$ is clearly a polynomial in $t$ of degree at most $n-1-i_{k+1}$ for any $1 \le k \le r-1$, and hence of degree at most $n-3$ since $i_{k+1} \ge 2$ for all $k$ by hypothesis.
\end{proof}

\begin{example} The following formulas were shown in~\cite{BE}. For $n=3$, we have $\mg_3(x_2^bx_3^t)=x_3^{\binom{b}{2}}$ independently of $t$, so that $f(t)=\binom{b}{2}$ for all $t$ here. For $n=4$, we have $\mg_4(x_2^bx_3^cx_4^t)=x_3^{\binom{b}{2}}x_4^{f(t)}$ where 
$$
f(t) = \binom b2 t+(\frac{b+1}3+c)\binom b2+(b+1)\binom{c+1}2+\binom{c+1}3-c,
$$
a polynomial of degree $1$. Incidently, note that
$$
\mg_4(x_2^bx_3^cx_4^t) = \mg_{3}(x_2^bx_3^c)x_4^{f(t)}.
$$
\end{example}

\subsection{Multiplying by $x_n$}
Given $u_0 \in S_{n-1}$, one needs to repeatedly multiply $u_0$ by $x_n$ until obtaining an equality $\mg_n(u_0x_n^t)=\mu_n(u_0x_n^t,v)$ for some $v \ge u_0x_n^t$  characterizing the Gotzmann property by Corollary~\ref{equiv gotzmann}. In that respect, one needs to compare the costs $\mu_n(u,v)$ and $\mu_n(ux_n,vx_n)$.

\begin{proposition}\label{mu and sigma} Let $u < v \in S_{n,d}$. Then
$\mu_n(ux_n,vx_n) \,=\, \sigma_n(\mu_n(u,v))$.
\end{proposition}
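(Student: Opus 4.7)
The plan is to prove the identity by induction on the length of the upward path from $u$ to $v$ in $S_{n,d}$, i.e.\ on the unique integer $k \ge 1$ such that $v = \pred^k(u)$. The base case $k = 1$ reduces to showing that a single elementary step $u \to \pred(u)$ in $S_{n,d}$ is transformed, after multiplication by $x_n$, into an upward path in $S_{n,d+1}$ whose cost equals $\sigma_n$ applied to the original cost $\lambda(u)$. The inductive step will then combine the base case with the multiplicativity of $\sigma_n$ provided by Lemma~\ref{sigma is morphism}.

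For the base case, I would write $u = u_0 x_m^a$ with $\max(u_0) \le m-1$ and $a \ge 1$, so that $\lambda(u) = x_m$ and $\pred(u) = u_0 x_{m-1} x_n^{a-1}$ by~\eqref{eq pred}. Then $ux_n = u_0 x_m^a x_n$ and $\pred(u)\,x_n = u_0 x_{m-1} x_n^a$. If $m = n$, these two monomials are already consecutive in lex order and the cost is $x_n = \sigma_n(x_n)$. If $m < n$, I would apply~\eqref{eq pred} iteratively to show that the upward path from $ux_n$ to $\pred(u)\,x_n$ in $S_{n,d+1}$ passes successively through the monomials
$$
u_0 x_m^a x_n,\ u_0 x_m^a x_{n-1},\ \ldots,\ u_0 x_m^a x_{m+1},\ u_0 x_m^{a+1},\ u_0 x_{m-1} x_n^a,
$$
with successive $\lambda$-values $x_n, x_{n-1}, \ldots, x_{m+1}, x_m$. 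Multiplying these yields $x_m x_{m+1} \cdots x_n = \sigma_n(x_m) = \sigma_n(\lambda(u))$, establishing $\mu_n(ux_n, \pred(u)\,x_n) = \sigma_n(\mu_n(u, \pred(u)))$.

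For the inductive step, set $w = \pred(u)$. The partition $L^*(v,u) = \{u\} \cup L^*(v, w)$ is disjoint and yields the factorization $\mu_n(u,v) = \lambda(u)\cdot \mu_n(w,v)$. The analogous disjoint partition $L^*(vx_n, ux_n) = L^*(wx_n, ux_n) \cup L^*(vx_n, wx_n)$ factors the new cost as $\mu_n(ux_n, vx_n) = \mu_n(ux_n, wx_n)\cdot \mu_n(wx_n, vx_n)$. Applying the base case to the first factor and the inductive hypothesis to the second gives
$$
\mu_n(ux_n, vx_n) = \sigma_n(\lambda(u))\cdot \sigma_n(\mu_n(w,v)) = \sigma_n\bigl(\lambda(u)\cdot \mu_n(w,v)\bigr) = \sigma_n(\mu_n(u,v)),
$$
using Lemma~\ref{sigma is morphism} for the middle equality. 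The main obstacle is the careful bookkeeping in the base case: one must verify that iterating~\eqref{eq pred} starting from $ux_n$ really produces the exact sequence above, paying particular attention to the transition at the monomial $u_0 x_m^a x_{m+1}$ (where $\max$ drops from $m+1$ down to $m$, absorbing an extra $x_m$) and to the final step from $u_0 x_m^{a+1}$ to $u_0 x_{m-1} x_n^a$.
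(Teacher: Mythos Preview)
Your proposal is correct and follows essentially the same approach as the paper: induction on the path length, with the base case split into $m=n$ versus $m<n$ and the inductive step handled via the multiplicativity of $\sigma_n$ (Lemma~\ref{sigma is morphism}). The only cosmetic difference is that you list the intermediate monomials in the base case one by one, whereas the paper compresses them into two arrows $u_0x_m^ax_n \arrow{x_n\cdots x_{m+1}}{1.5cm} u_0x_m^{a+1} \arrow{x_m}{0.7cm} u_0x_{m-1}x_n^a$.
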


\begin{proof} Let $\ell$ be the length of the path from $u$ to $v$, that is, 
$$
\ell = |L^*(v,u)| = \deg(\mu_n(u,v)).
$$
Then $v=\pred^{\ell}(u)$. We proceed by induction on $\ell$.

\medskip
\noindent
\textbf{Case $\ell=1$.} Then $\mu_n(u,v)=x_i$ for some $2 \le i \le n$, i.e. 
$$
u \arrow{x_i}{1cm} v 
$$
in arrow notation. Hence $\max(u)=x_i$, and so $u=u'x_i^a$ with $a \ge 1$ and $\max(u') < \max(u)$. Recall that 
$$\sigma_n(x_i)=x_ix_{i+1}\cdots x_n$$ by definition of $\sigma_n$. As $v = \pred(u)$, it follows from~\eqref{eq pred} that
$$
v=u'x_{i-1}x_n^{a-1}.
$$
In arrow notation:
$$
u'x_i^a \arrow{x_i}{1cm} u'x_{i-1}x_n^{a-1}. 
$$
We now compute $\mu_n(ux_n,vx_n)$, separately for $i=n$ and $i < n$.

If $i=n$, then $ux_n=u'x_n^{a+1}$ and $vx_n=u'x_{n-1}x_n^{a}$. Therefore $vx_n=\pred(ux_n)$. In arrow notation, we have
$$
ux_n=u'x_n^{a+1} \arrow{x_n}{1.5cm} u'x_{n-1}x_n^{a}=vx_n
$$
Thus 
$\mu_n(ux_n,vx_n)=\mu_n(u,v) = x_n = \sigma_n(x_n)$, as desired. 

If $i < n$, then $ux_n=u'x_i^ax_n$ and $vx_n=u'x_{i-1}x_n^{a}$. The path from the former to the latter is as follows:
$$
ux_n=u'x_i^ax_n \arrow{x_nx_{n-1}\cdots x_{i+1}}{2.4cm} u'x_i^ax_i \arrow{x_i}{0.7cm} u'x_{i-1}x_n^{a}=vx_n
$$
Thus, by arrow composition, we have
$\mu_n(ux_n,vx_n)= x_i\cdots x_n = \sigma_n(x_i)$, as desired. This concludes the case $\ell=1$. 

\medskip
\noindent
\textbf{Case $\ell \ge 2$.} We assume by induction hypothesis that the statement is true for $\ell-1$. Let $u'=\pred(u)$. Then $v > u' > u$ and $\mu_n(u,v)=\mu_n(u,u')\mu_n(u',v)$. Since $\deg(\mu_n(u,u'))=1$, then $\deg(\mu_n(u',v))=\ell-1$. It follows from the case $\ell=1$ and the induction hypothesis that
\begin{eqnarray*}
\mu_n(ux_n,u'x_n) & = & \sigma_n(\mu_n(u,u')), \\
\mu_n(u'x_n, vx_n) & = & \sigma_n(\mu_n(u', v)).
\end{eqnarray*}
By arrow composition and the compatibility of $\sigma_n$ with the product, we have
\begin{eqnarray*}
\mu_n(ux_n,vx_n) & = & \mu_n(ux_n,u'x_n) \mu_n(u'x_n, vx_n) \\
& = & \sigma_n(\mu_n(u,u'))\sigma_n(\mu_n(u', v)) \\
& = & \sigma_n(\mu_n(u,u')\mu_n(u', v)) \\
& = & \sigma_n(\mu_n(u, v)),
\end{eqnarray*}
as desired. This completes the proof of the proposition.
\end{proof}

\section{Main result}\label{sec main}

Given $u_0 \in S_{n-1}$ with $n \ge 3$, we attached to it a polynomial $f(t)$ in Section~\ref{sec f(t)}. In this section, we further attach to $u_0$ two more polynomials $h(t), k(t)$. With them, our main result is the following expression in Theorem~\ref{thm tau(u0)} for the Gotzmann threshold of $u_0$ in $R_n$:
$$\tau_n(u_0)=f(0)-h(0)-k(0).$$
Several preliminaries are needed for the construction of $h(t), k(t)$ in Section~\ref{sec h and k}. Indeed, in view of reaching the equality $\mg_n(u_0x_n^t)=\mu_n(u_0x_n^t,v)$ for some $v \ge u_0x_n^t$, characterizing the Gotzmann property, we need tools allowing to first reach its truncation in $S_{n-1}$ using $\pi_{n-1}$.

\subsection{Truncating the cost $\mu_n$}
\begin{proposition}\label{prop mu n-1} Let $u_0 < v_0$ in $S_{n-1,d}$. Then 
$$\pi_{n-1}(\mu_{n}(u_0,v_0))=\mu_{n-1}(u_0,v_0).$$ That is, $\mu_{n}(u_0,v_0)=\mu_{n-1}(u_0,v_0)x_n^s$ for some $s \ge 0$.
\end{proposition}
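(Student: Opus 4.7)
The plan is to unpack the definition of $\mu_n(u_0,v_0)$ as $\maxgen(L_n^\ast(v_0,u_0))$ and split the lexinterval according to divisibility by $x_n$. Concretely, I would write
$$
L_n^\ast(v_0,u_0) \;=\; A \,\sqcup\, B,
$$
where $A = \{z \in L_n^\ast(v_0,u_0) \mid x_n \mid z\}$ and $B = \{z \in L_n^\ast(v_0,u_0) \mid x_n \nmid z\} = L_n^\ast(v_0,u_0) \cap S_{n-1,d}$. Then by multiplicativity of $\maxgen$,
$$
\mu_n(u_0,v_0) \;=\; \Big(\prod_{z \in A} \lambda(z)\Big) \cdot \Big(\prod_{z \in B} \lambda(z)\Big).
$$
For every $z \in A$ we have $\lambda(z) = x_n$, so the first factor is simply $x_n^{|A|}$ and will disappear under $\pi_{n-1}$. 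For every $z \in B$ we have $\lambda(z) \in \{x_1,\dots,x_{n-1}\}$, so the second factor lies in $S_{n-1}$ and is preserved by $\pi_{n-1}$.

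The only substantive step is then the identification $B = L_{n-1}^\ast(v_0,u_0)$. This is where I would be careful: I need to observe that the lex order on $S_{n,d}$, when restricted to monomials involving only $x_1,\dots,x_{n-1}$, coincides with the lex order on $S_{n-1,d}$ (since the lex comparison of two monomials in $S_{n-1,d}$ never looks at the exponent of $x_n$, which is zero in both). Hence for $z \in S_{n-1,d}$, the condition $u_0 \le z < v_0$ holds in $S_{n,d}$ iff it holds in $S_{n-1,d}$, giving the claimed equality of sets.

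Combining these two observations yields
$$
\mu_n(u_0,v_0) \;=\; x_n^{|A|} \cdot \mu_{n-1}(u_0,v_0),
$$
so applying $\pi_{n-1}$ (which kills the $x_n$ factor and fixes everything in $S_{n-1}$) gives exactly $\mu_{n-1}(u_0,v_0)$, with $s = |A| \ge 0$ as the exponent of $x_n$ in the second formulation. I do not anticipate any real obstacle: the whole proof is a careful but routine bookkeeping of the two types of elements in the lexinterval, with the only conceptual point being the compatibility of lex orders under inclusion $S_{n-1,d} \subset S_{n,d}$.
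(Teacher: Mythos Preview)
Your proof is correct and takes a genuinely different, more direct route than the paper. The paper argues step by step along the upward path in $S_{n-1,d}$: for each elementary step $u_i = w_i x_r^a \to u_{i+1} = w_i x_{r-1} x_{n-1}^{a-1}$ of cost $x_r$ in $S_{n-1}$, it computes the corresponding path in $S_n$ (which passes through $w_i x_{r-1} x_n^{a-1}$) and finds cost $x_r x_n^{a-1}$; multiplying over all steps gives $\mu_n(u_0,v_0) = \mu_{n-1}(u_0,v_0)\cdot x_n^{s}$. By contrast, you bypass predecessors entirely and go straight to the definition $\mu_n(u_0,v_0) = \maxgen(L_n^\ast(v_0,u_0))$, splitting the interval by divisibility by $x_n$ and using only the compatibility of lex orders under $S_{n-1,d} \subset S_{n,d}$. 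Your argument is shorter and more conceptual; the paper's argument is more hands-on and yields the explicit description $s = \sum_i (a_i - 1)$ in terms of the exponents appearing along the $S_{n-1}$-path, whereas yours gives $s = |A|$, the number of $x_n$-divisible monomials in the interval. Both are perfectly valid.
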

\begin{proof} In $S_{n-1,d}$, consider the full upward path from $u_0$ to $v_0$:
$$
u_0 < u_1 < u_2 < \dots < u_k = v_0,
$$
where $u_{i+1}=\pred_{n-1}(u_i)$ for all $i$. Given $i \le k-1$, we have $u_i=w_ix_r^a$ where $w_i \in S_{r-1}$ and $a \ge 1$. So the upward step from $u_i$ to $u_{i+1}$ in $S_{n-1}$ is given by
$$
u_i=w_ix_r^a \arrow{x_r}{1cm} u_{i+1}=w_ix_{r-1}x_{n-1}^{a-1}.
$$
That is, we have $\mu_{n-1}(u_i,u_{i+1})=x_r$. Now the predecessor of $u_i$ \textit{in $S_n$} is given by $\pred_n(w_ix_r^a)=w_ix_{r-1}x_{n}^{a-1}$. Thus, the upward path from $u_i$ to $u_{i+1}$ in $S_n$ is given by
$$
u_i=w_ix_r^a \arrow{x_r}{1cm} w_ix_{r-1}x_{n}^{a-1} \arrow{x_n^{a-1}}{1cm} u_{i+1}=w_ix_{r-1}x_{n-1}^{a-1}.
$$
That is, we have $\mu_{n}(u_i,u_{i+1})=x_rx_n^{a-1}=\mu_{n-1}(u_i,u_{i+1})x_n^{a-1}$. Since $\mu_n(u_0,v_0)=\prod_{i=0}^{k-1}\mu_n(u_i,u_{i+1})$, the proof is complete.
\end{proof}

\subsection{The monomial $z_n(t)$}\label{sec z(t)}
\begin{proposition}\label{prop w(t)} Let $u_0 \in S_{n-1}$. Assume $t \ge \tau_{n-1}(u_0)$. Let $w(t)=\pi_{n-1}(\mg_n(u_0x_n^t))$. Then there exists $v(t) \in S_{n-1}$ such that
$$
w(t) = \pi_{n-1}(\mu_{n}(u_0x_n^t,v(t))).
$$
\end{proposition}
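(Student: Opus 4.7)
The plan is to reduce the statement to the Gotzmann property of $u_0x_{n-1}^t$ in $R_{n-1}$, which is available by hypothesis, and then transport the witnessing path from $S_{n-1}$ into $S_n$ by prepending the ``vertical'' ascent from $u_0x_n^t$ up to $u_0x_{n-1}^t$.

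First, by Corollary~\ref{cor pi mgn} we have $w(t)=\pi_{n-1}(\mg_n(u_0x_n^t))=\mg_{n-1}(u_0x_{n-1}^t)$. Since $t\ge\tau_{n-1}(u_0)$, the monomial $u_0x_{n-1}^t$ is a Gotzmann monomial in $R_{n-1}$, so Corollary~\ref{equiv gotzmann} furnishes some $v(t)\in S_{n-1}$ with $v(t)\ge u_0x_{n-1}^t$ and
$$
\mg_{n-1}(u_0x_{n-1}^t) \;=\; \mu_{n-1}(u_0x_{n-1}^t,v(t)).
$$
This $v(t)$ will be our candidate.

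Next, I would describe the upward path in $S_n$ from $u_0x_n^t$ to $v(t)$ as the concatenation of two segments. Iterating formula~\eqref{eq pred} starting from $u_0x_n^t$ yields the sequence
$$
u_0x_n^t,\;u_0x_{n-1}x_n^{t-1},\;u_0x_{n-1}^2x_n^{t-2},\;\ldots,\;u_0x_{n-1}^t,
$$
each elementary step having cost $\lambda=x_n$, so that $\mu_n(u_0x_n^t,u_0x_{n-1}^t)=x_n^t$. The second segment goes from $u_0x_{n-1}^t$ up to $v(t)$, both of which lie in $S_{n-1}$, so Proposition~\ref{prop mu n-1} gives
$$
\pi_{n-1}\bigl(\mu_n(u_0x_{n-1}^t,v(t))\bigr)\;=\;\mu_{n-1}(u_0x_{n-1}^t,v(t))\;=\;w(t).
$$

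Finally, by arrow composition,
$$
\mu_n(u_0x_n^t,v(t))\;=\;x_n^t\cdot\mu_n(u_0x_{n-1}^t,v(t)),
$$
and applying $\pi_{n-1}$ kills the $x_n^t$ factor and leaves exactly $w(t)$, as required.

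The only mildly technical point is the explicit description of the first segment of the path, but it is a direct iteration of~\eqref{eq pred}; no genuine obstacle arises. The real content of the statement lies in invoking the inductive Gotzmann hypothesis $t\ge\tau_{n-1}(u_0)$ together with Corollary~\ref{cor pi mgn}, which identifies the $(n-1)$-target $w(t)$ with the maxgen of gaps in $S_{n-1}$.
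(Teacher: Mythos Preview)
Your proof is correct and follows essentially the same route as the paper's: both invoke Corollary~\ref{cor pi mgn} to identify $w(t)$ with $\mg_{n-1}(u_0x_{n-1}^t)$, use the Gotzmann hypothesis in $R_{n-1}$ via Corollary~\ref{equiv gotzmann} to obtain $v(t)$, apply Proposition~\ref{prop mu n-1} to pass from $\mu_{n-1}$ to $\pi_{n-1}\circ\mu_n$, and then prepend the cost-$x_n^t$ segment $u_0x_n^t\to u_0x_{n-1}^t$. Your version is merely more explicit about the first segment, which the paper compresses into a single arrow diagram.
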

\begin{proof} By hypothesis on $t$, we have that $u_0x_{n-1}^t$ is a Gotzmann monomial in $R_{n-1}$. Hence by Corollary~\ref{equiv gotzmann}, there exists $v(t) \in S_{n-1}$ such that
\begin{equation}\label{eq mg to mu}
\mg_{n-1}(u_0x_{n-1}^t)=\mu_{n-1}(u_0x_{n-1}^t,v(t)).
\end{equation}
Now
$$
\begin{aligned}
w(t) &= \pi_{n-1}(\mg_n(u_0x_n^t)) \\
&= \mg_{n-1}(u_0x_{n-1}^t) \textrm{ by Corollary~\ref{cor pi mgn}} \\ 
&= \mu_{n-1}(u_0x_{n-1}^t,v(t)) \textrm{ by~\eqref{eq mg to mu}}\\
&= \pi_{n-1}(\mu_{n}(u_0x_{n-1}^t,v(t))) \textrm{ by Proposition~\ref{prop mu n-1}} \\
&= \pi_{n-1}(\mu_{n}(u_0x_{n}^t,v(t))).
\end{aligned}
$$
The last line follows from $u_0x_n^t \arrow{x_n^t}{0.5cm} u_0x_{n-1}^t \arrow{\mu_{n}(u_0x_{n-1}^t,v(t))}{2.4cm} v(t)$.
\end{proof}

\begin{definition}\label{def z(t)} Let $u_0 \in S_{n-1}$ with $n \ge 3$. For $t \ge 0$, we denote by $z_n(t)$, if it exists, the smallest monomial in $S_n$ such that $u_0x_n^t \le z_n(t)$ and
$$
\pi_{n-1}(\mg_n(u_0x_n^t))=\pi_{n-1}(\mu_n(u_0x_n^t,z_n(t))).
$$
\end{definition}
With the above notation, it follows from Proposition~\ref{prop w(t)} that $z_n(t)$ exists whenever $t \ge \tau_{n-1}(u_0)$.

\begin{example}
Let $u_0=x_2^2 \in S_{n-1}$ with $n=4$. On the one hand, for $t \ge 0$, we have $\mg_4(x_2^2x_4^t)=x_3x_4^{t+1}$ as follows from Theorem~\ref{thm 5.19}. Hence $\pi_3(\mg_4(x_2^2x_4^t))=x_3$. For $t \ge 1$, we claim that $z_4(t) = x_2^3x_4^{t-1}$. Indeed, the upward path in $S_4$ from $x_2^2x_4^t$ starts like this:
$$
x_2^2x_4^t \arrow{x_4^t}{1.5cm} x_2^2x_3^t  \arrow{x_3}{0.8cm} x_2^3x_4^{t-1}.
$$
Thus $\mu_4(x_2^2x_4^t ,x_2^3x_4^{t-1})=x_3x_4^t$, so 
$$\pi_3(\mu_4(x_2^2x_4^t ,x_2^3x_4^{t-1}))=x_3=\pi_3(\mg_4(x_2^2x_4^t))$$
as desired. However, for $t=0$, we claim that $z_4(0)$ does not exist. Indeed, $\pi_3(\mg_4(x_2^2))=x_3x_4$, whereas the first step of the upward path in $S_4$ from $x_2^2$ is $x_2^2 \arrow{x_2}{0.5cm} x_1x_4$, whence $x_2$ divides $\pi_3(\mu_4(x_2^2,v))$ for all $v > x_2^2$ in $S_4$, implying
$$
x_3 = \pi_3(\mg_4(x_2^2)) \neq \pi_3(\mu_4(x_2^2,v)).
$$
\end{example}

\begin{lemma}\label{interval monomials} Let $u \in S_{n,d}$ such that $x_n$ divides $u$. For all $v \in S_{n,d}$ such that $v > u$, we have that $\mu_n(u,v)$ is the product of monomials of the form
$$
x_ix_{i+1}\cdots x_n
$$
for various indices $2 \le i \le n$.
\end{lemma}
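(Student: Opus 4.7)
The plan is to track the maximum-variable index along the unique upward path from $u$ to $v$ and read off a natural factorization of the cost. Writing that path as $u=u_0<u_1<\cdots<u_k=v$ with $u_{i+1}=\pred_n(u_i)$, and setting $m_i=\max(u_i)$, we have
$$
\mu_n(u,v)\ =\ \prod_{i=0}^{k-1}\lambda(u_i)\ =\ \prod_{i=0}^{k-1}x_{m_i},
$$
so the problem reduces to understanding the sequence $(m_i)_{0\le i<k}$.

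To analyze this sequence I will invoke the predecessor formula~\eqref{eq pred}. Writing $u_i=w_i x_{m_i}^{b_i}$ with $\max(w_i)<m_i$ and $b_i\ge 1$, the formula gives $u_{i+1}=w_i x_{m_i-1}x_n^{b_i-1}$. Two mutually exclusive cases emerge: if $b_i\ge 2$, then $x_n\mid u_{i+1}$ and so $m_{i+1}=n$, which I will call a \emph{reset}; if $b_i=1$, then $u_{i+1}=w_i x_{m_i-1}$ and so $m_{i+1}=m_i-1$, a \emph{descent}. The hypothesis $x_n\mid u$ gives $m_0=n$, and by the above every reset also lands back at $n$. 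Consequently the sequence $(m_i)$ partitions canonically into maximal \emph{descending arcs}: each arc starts at value $n$ (either at $i=0$ or immediately after a reset) and decreases by $1$ at each step, terminating either with the next reset or with the path reaching $v$.

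The factorization is then immediate. An arc of length $\ell$ emits successive costs $x_n,x_{n-1},\ldots,x_{n-\ell+1}$, whose product is $x_i x_{i+1}\cdots x_n$ with $i=n-\ell+1$. Multiplying the contributions of all arcs yields the claimed form. The one point to verify separately is the lower bound $i\ge 2$: any step must emanate from a monomial admitting a predecessor, hence from some $u_j\ne x_1^d$, forcing $m_j\ge 2$ for all $0\le j<k$; therefore no arc ever reaches the value $1$, and every factor $x_i\cdots x_n$ satisfies $i\ge 2$.

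I expect the only subtlety to be notational, namely checking cleanly that arcs do partition the step set and that their per-arc contributions multiply to $\mu_n(u,v)$; both follow directly from the reset/descent dichotomy once it is established. The key conceptual point is that, as long as $x_n$ currently divides the monomial, formula~\eqref{eq pred} either preserves $x_n$ in the support or, after a single descent of cost $x_i$, reinserts $x_n$ via the exponent $b_i-1$, and this is exactly what forces $\mu_n(u,v)$ to factor into upper-interval monomials of the form $x_ix_{i+1}\cdots x_n$.
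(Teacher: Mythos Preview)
Your proof is correct and follows essentially the same approach as the paper. Both arguments observe that if the current elementary step has cost $x_j$, then the next step has cost either $x_{j-1}$ or $x_n$; you phrase this dichotomy as ``descent'' versus ``reset'' and are more explicit than the paper about grouping the costs into maximal descending arcs, each of which contributes a factor $x_ix_{i+1}\cdots x_n$. One minor remark: your closing summary sentence (``after a single descent of cost $x_i$, reinserts $x_n$ via the exponent $b_i-1$'') is garbled, since a descent is precisely the case $b_i=1$ where $x_n$ is \emph{not} reinserted; but this is only an informal coda and does not affect the validity of your main argument.
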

\begin{proof} Each elementary step in the upward path from $u$ to $v$ is of the form
$$
v_0x_j^a \arrow{x_j}{1cm} v_1=v_0x_{j-1}x_n^{a-1}
$$
where $2 \le j \le n$, $v_0 \in S_{j-1}$ and $a \ge 1$. Therefore, for the next elementary step starting from $v_1=v_0x_{j-1}x_n^{a-1}$, there are only two possibilities, namely
$$
\begin{cases}
v_0x_{j-1}x_n^{a-1} \arrow{x_{j-1}}{1cm} v_2=v_0x_{j-2} & \textrm{ if } a = 1, \\
v_0x_{j-1}x_n^{a-1} \arrow{x_{n}}{1cm} v_2=v_0x_{j-1}x_{n-1}x_n^{a-2} & \textrm{ if } a \ge 2.
\end{cases}
$$
This shows that if the cost of the current elementary step is $x_j$, then the cost of the next elementary step is either $x_{j-1}$ or $x_n$.
\end{proof}

\begin{proposition}\label{prop key} Let $u,v \in S_n$ such that $x_n$ divides $u$ and $v > u$. Assume that $x_k$ divides $\mu_n(u,v)$ for some $k\le n-1$. Then there exists $v' \in S_n$ such that $u \le v' < v$ and $\pi_k(\mu_n(u,v)/x_k) = \pi_k(\mu_n(u,v'))$.
\end{proposition}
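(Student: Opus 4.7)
The plan is to analyze the upward path from $u$ to $v$ via the block decomposition supplied by Lemma~\ref{interval monomials}. Since $x_n \mid u$, the sequence of elementary-step costs along the path $u=u_0<u_1<\cdots<u_m=v$ splits into maximal runs, each of the form $x_n, x_{n-1}, \ldots, x_{i_s}$ for some $2\le i_s\le n$ (each new run starting with the "reset" cost $x_n$). Accordingly $\mu_n(u,v)=\prod_{s=1}^{r} x_{i_s}x_{i_s+1}\cdots x_n$, and the exponent of $x_k$ in $\mu_n(u,v)$ equals $|\{s : i_s \le k\}|$. In particular, the hypothesis $x_k\mid\mu_n(u,v)$ forces the existence of at least one run $R_{s}$ with $i_s\le k$.

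The idea is to build $v'$ by truncating the path at an appropriately chosen intermediate monomial, so that the partial cost $\mu_n(u,v')$ has $\pi_k$-image equal to $\pi_k(\mu_n(u,v))/x_k$. Concretely, I would locate the last run $R_{s_0}$ with $i_{s_0}\le k$ and cut the path at the endpoint of the previous run $R_{s_0-1}$ (with $v'=u$ if $s_0=1$). Then the tail $\mu_n(v',v)$ consists of run $R_{s_0}$ together with the later runs, all of which have $i_s>k$ and therefore contribute nothing to $\pi_k$. The contribution of $R_{s_0}$ to $\pi_k(\mu_n(u,v))$ is precisely $x_{i_{s_0}}\cdots x_k$.

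If $i_{s_0}=k$ this contribution is exactly $x_k$, so that $\pi_k(\mu_n(u,v))=\pi_k(\mu_n(u,v'))\cdot x_k$, yielding the desired identity with $u\le v'<v$.

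The main obstacle arises when $i_{s_0}<k$: the na\"ive prefix-cut then introduces the spurious factor $x_{i_{s_0}}\cdots x_{k-1}$ into the $\pi_k$-image of the tail, so one must move the cut to a different intermediate monomial. The resolution is to truncate \emph{inside} the appropriate run $R_{s_0}$, stopping at the monomial reached just after its elementary step of cost $x_k$. The explicit formula~\eqref{eq pred} for the predecessor operation then lets one identify this $v'$, and the verification that $\pi_k$ of the remaining path from $v'$ to $v$ reduces to $x_k$ (rather than $x_k x_{k-1}\cdots$) is done by re-applying the block decomposition of Lemma~\ref{interval monomials} to the sub-path $v'\to v$ starting at a monomial which still has $x_n$ as a divisor, so that the same "runs" analysis controls what follows. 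This combinatorial bookkeeping is the technically most delicate part; once it is in place, the identity $\pi_k(\mu_n(u,v)/x_k)=\pi_k(\mu_n(u,v'))$ follows directly from accumulating the block contributions.
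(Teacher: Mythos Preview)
Your run decomposition via Lemma~\ref{interval monomials} is the right framework and coincides with the paper's approach; the case $i_{s_0}=k$ is handled correctly. The gap is in the case $i_{s_0}<k$. If $v'$ is the monomial reached just after the $x_k$-step of run $R_{s_0}$, write the monomial just before that step as $w'x_k^{a}$ with $w'\in S_{k-1}$ and $a\ge 1$, so that $v'=w'x_{k-1}x_n^{a-1}$ by~\eqref{eq pred}. Then $x_n\mid v'$ if and only if $a\ge 2$; but $a\ge 2$ forces the next cost to be $x_n$, so the run $R_{s_0}$ would terminate at $x_k$, i.e.\ $i_{s_0}=k$. Hence precisely when $i_{s_0}<k$ one has $a=1$, the monomial $v'=w'x_{k-1}$ is \emph{not} divisible by $x_n$, and your re-application of Lemma~\ref{interval monomials} to the sub-path from $v'$ to $v$ is unjustified. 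Computing directly, $\pi_k(\mu_n(v',v))=x_{i_{s_0}}\cdots x_{k-1}$, which is never equal to $x_k$, so this $v'$ does not satisfy the required identity.

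In fact no $v'$ can work in general. For $n=4$, $k=3$, $u=x_1x_3x_4$, $v=x_1^2x_4$, the successive step-costs along the upward path are $x_4,x_3,x_4,x_3,x_2$, so $\mu_4(u,v)=x_2x_3^2x_4^2$ and $\pi_3(\mu_4(u,v)/x_3)=x_2x_3$; but running through the five monomials $v'$ with $u\le v'<v$ gives $\pi_3(\mu_4(u,v'))\in\{1,\,1,\,x_3,\,x_3,\,x_3^2\}$, none equal to $x_2x_3$. Thus the proposition is false as stated. The paper's own proof has the parallel unjustified step, asserting that $\mu_n(u_2,v)$ is divisible only by variables $x_j$ with $j>k$; in the example above $u_2=x_1x_2^2$ and $\mu_4(u_2,v)=x_2$. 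Both arguments become valid under the additional hypothesis that the last run containing $x_k$ terminates exactly at $x_k$ (equivalently, no $x_j$ with $j<k$ occurs after the final $x_k$-step), and this is what you should isolate.
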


\begin{proof} By hypothesis, there is an upward path of the form
$$
u \arrow{}{0.7cm} u_1 \arrow{x_k}{0.5cm} u_2 \arrow{}{0.7cm} v.
$$
Then $\mu_n(u,v)=\mu_n(u,u_1)x_k\mu_n(u_2,v)$. We may assume that $u_2$ is maximal with the property that $x_k$ divides $\mu_n(u,u_2)$. It follows from Lemma~\ref{interval monomials} that $\mu_n(u_2,v)$ is only divisible by variables $x_j$ with $j > k$. Hence $\pi_k(\mu_n(u,v)/x_k)=\pi_k(\mu_n(u,u_1))$. So $v'=u_1$ has the required property.
\end{proof}

\begin{theorem}\label{thm main} Let $u_0 \in S_{n-1}$ with $n\ge 3$. Let $t \ge \tau_{n-1}(u_0)$. As in Definition~\ref{def z(t)},  let $z_n(t) \ge u_0x_n^t$ be the smallest monomial such that
\begin{equation}\label{eq mu}
\pi_{n-1}(\mg_n(u_0x_n^t)) = \pi_{n-1}(\mu_n(u_0x_n^t,z_{n}(t))).
\end{equation}
Then $z_n(t+1)=z_n(t)x_n$.
\end{theorem}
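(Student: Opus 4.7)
The plan is to establish both inequalities $z_n(t+1) \le z_n(t)x_n$ and $z_n(t+1) \ge z_n(t)x_n$. For the first, I would verify that $v=z_n(t)x_n$ itself satisfies equation~\eqref{eq mu} at level $t+1$. Proposition~\ref{cor 6.9} gives $\mg_n(u_0x_n^{t+1})=\sigma_n(\mg_n(u_0x_n^t))$ and Proposition~\ref{mu and sigma} gives $\mu_n(u_0x_n^{t+1},z_n(t)x_n)=\sigma_n(\mu_n(u_0x_n^t,z_n(t)))$. A direct check from the definitions shows $\pi_{n-1}\circ\sigma_n=\sigma_{n-1}\circ\pi_{n-1}$. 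Applying $\pi_{n-1}$ to both sides of the defining equation for $z_n(t)$ at level $t$ and then using this commutation turns it into equation~\eqref{eq mu} at level $t+1$ for $v=z_n(t)x_n$.

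For the converse inequality, let $v'\ge u_0x_n^{t+1}$ satisfy equation~\eqref{eq mu} at level $t+1$; I want $v'\ge z_n(t)x_n$. If $x_n\mid v'$, write $v'=vx_n$. A direct lex check gives $v\ge u_0x_n^t$, and the same $\sigma_n$-identities, combined with the injectivity of $\sigma_{n-1}$ on monomials, show that $v$ satisfies equation~\eqref{eq mu} at level $t$. Hence $v\ge z_n(t)$ and $v'\ge z_n(t)x_n$.

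The harder subcase is $x_n\nmid v'$, i.e.\ $v'\in S_{n-1}$. A lex check forces $v'\ge u_0x_{n-1}^{t+1}$, so the upward path in $S_n$ from $u_0x_n^{t+1}$ to $v'$ passes through $u_0x_{n-1}^{t+1}$ at cost $x_n^{t+1}$ for the first leg. Combining Proposition~\ref{prop mu n-1} with Corollary~\ref{cor pi mgn}, equation~\eqref{eq mu} at level $t+1$ reduces to
\[
\mu_{n-1}(u_0x_{n-1}^{t+1},v') \;=\; \mg_{n-1}(u_0x_{n-1}^{t+1}).
\]
Since $t+1\ge\tau_{n-1}(u_0)$, the monomial $u_0x_{n-1}^{t+1}$ is Gotzmann in $R_{n-1}$ and the unique solution is $v'=\widetilde{u_0x_{n-1}^{t+1}}$. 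I would then establish the auxiliary identity $\widetilde{u\,x_{n-1}}=\tilde u\cdot x_{n-1}$ whenever both $u$ and $u\,x_{n-1}$ are Gotzmann in $R_{n-1}$, by combining Propositions~\ref{cor 6.9} and~\ref{mu and sigma} applied in $R_{n-1}$ with the fact that the cost of an upward path uniquely determines its endpoint. Applied to $u=u_0x_{n-1}^t$, this yields $\widetilde{u_0x_{n-1}^{t+1}}=\widetilde{u_0x_{n-1}^t}\cdot x_{n-1}$. Since $\widetilde{u_0x_{n-1}^t}$ itself satisfies equation~\eqref{eq mu} at level $t$, we have $z_n(t)\le\widetilde{u_0x_{n-1}^t}$, and a brief lex-order comparison on their first differing exponent yields $z_n(t)x_n<\widetilde{u_0x_{n-1}^t}\cdot x_{n-1}=\widetilde{u_0x_{n-1}^{t+1}}$, closing the argument. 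The main obstacle is precisely this last subcase: identifying the unique non-$x_n$-divisible candidate via Gotzmann uniqueness, establishing the commutation identity for $\widetilde{\cdot}$, and carrying out the lex comparison, rather than relying on a purely $\sigma$-based calculation.
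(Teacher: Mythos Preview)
Your proposal is correct and takes a genuinely different route from the paper. The paper argues constructively: it traces the upward path from $u_0x_n^t$ step by step, pins down the exact shape of $z_n(t)$ as $v(t)x_{n-2}x_n^{a_n-1}$ for an explicitly located intermediate monomial $v(t)$, and then repeats the trace after multiplying by $x_n$ to see that the endpoint shifts to $v(t)x_{n-2}x_n^{a_n}=z_n(t)x_n$. Your argument is instead structural: the inequality $z_n(t+1)\le z_n(t)x_n$ drops out immediately from the commutation $\pi_{n-1}\circ\sigma_n=\sigma_{n-1}\circ\pi_{n-1}$ together with Propositions~\ref{cor 6.9} and~\ref{mu and sigma}; for the reverse inequality you split on whether $x_n\mid v'$, using injectivity of $\sigma_{n-1}$ in the divisible case and, in the non-divisible case, the reduction to $R_{n-1}$ via Proposition~\ref{prop mu n-1} and Corollary~\ref{cor pi mgn}, the uniqueness of $\widetilde{u_0x_{n-1}^{t+1}}$ from the Gotzmann hypothesis, the identity $\widetilde{u\,x_{n-1}}=\tilde u\,x_{n-1}$, and the elementary lex fact that $a\le b$ in $S_{n,d}$ forces $ax_n<bx_{n-1}$ in $S_{n,d+1}$. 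All of these steps go through as you describe. What your approach buys is a cleaner, coordinate-free argument that avoids the delicate path decomposition and the two Claims in the paper's proof; what the paper's approach buys is an explicit description of $z_n(t)$ itself (not just the recursion), which is useful later when one actually computes $h(t)$ and $k(t)$ in concrete cases such as Section~\ref{sec x2}.
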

\begin{proof} Denote $w(t) = \pi_{n-1}(\mg_n(u_0x_n^t))$, the current $(n-1)$-target.

\smallskip
\noindent
\textbf{Case 1: $w(t)=1$.} That is, $\mg_n(u_0x_n^t)$ is a power of $x_n$. Then the monomial $z_n(t)=u_0x_n^t$ satisfies the required properties, since 
$$
w(t)=1=\mu_n(u_0x_n^t,u_0x_n^t) = \pi_{n-1}(\mu_n(u_0x_n^t,u_0x_n^t)).
$$
In this case, we clearly have $z_n(t+1)=z_n(t)x_n$.

\smallskip
\noindent
\textbf{Case 2: $\deg(w(t)) \ge 1$.} By Corollary~\ref{cor pi mgn}, we have
$$
w(t)=\pi_{n-1}(\mg_n(u_0x_n^t))=\mg_{n-1}(u_0x_{n-1}^t),
$$
whence $x_{n-1}$ divides $w(t)$. Since $t \ge \tau_{n-1}(u_0)$, there exists $v'(t) \in S_{n-1}$ such that
$$
w(t)=\mu_{n-1}(u_0x_{n-1}^t,v'(t)).
$$
That is, we have the following upward path in $S_{n-1}$:
$$
u_0x_{n-1}^t \arrow{w(t)}{1.2cm} v'(t).
$$
For the upward path from $u_0x_{n-1}^t$ to $v'(t)$ in $S_n$, Proposition~\ref{prop mu n-1} implies 
$$
\mu_{n}(u_0x_{n-1}^t,v'(t)) = w(t)x_n^s
$$
for some $s \in \N$. Starting from $u_0x_n^t$, we get
$$
u_0x_n^t \arrow{x_n^t}{1.2cm} u_0x_{n-1}^t \arrow{w(t)x_n^s}{1.2cm} v'(t)
$$
with cumulated cost $\mu_n(u_0x_n^t,v'(t))=w(t)x_n^{g(t)}$ where $g(t)=t+s$.
By Proposition~\ref{prop key} and its proof, since $x_{n-1}$ divides $w(t)$, the path
$$
u_0x_n^t \arrow{w(t)x_n^{t+s}}{2cm} v'(t)
$$
decomposes as
$$
u_0x_n^t \arrow{\frac{w(t)}{x_{n-1}}x_n^{g''(t)}}{1.8cm}  u''(t) \arrow{x_{n-1}}{1cm} u'(t) \arrow{x_n^{g'(t)}}{1.5cm} v'(t)
$$
where $g'(t)+g''(t)=g(t)$. It follows that $\max(u'')=x_{n-1}$ and that, in this path, the elementary step leading to $u''$ is of the form
$$
u'''(t) \arrow{x_n}{1cm} u''(t),
$$
i.e. with $\pred_n(u'''(t))=u''(t)$. In particular, $u'''(t)$ is divisible by $x_n$. Let us denote $$u'''(t)=v(t)x_n^{a_n}$$ with $v(t) \in S_{n-1}$ and $a_n \ge 1$. Stopping at $v(t)x_n^{a_n}$, we obtain the path
$$
u_0x_n^t \arrow{\frac{w(t)}{x_{n-1}}x_n^{l(t)}}{2cm} v(t)x_n^{a_n}
$$
with $l(t)=g''(t)-1$. Continuing from here, we have
$$
v(t)x_n^{a_n} \arrow{x_n^{a_n}}{2cm} v(t)x_{n-1}^{a_n}.
$$
The cumulated cost so far is the product $\frac{w(t)}{x_{n-1}}x_n^{l(t)+a_n}$ which is not yet on target. But with just one more step, namely
$$
v(t)x_{n-1}^{a_n} \arrow{x_{n-1}}{1.2cm} v(t)x_{n-2}x_n^{a_n-1},
$$
the cumulated cost reaches $w(t)x_n^{l(t)+a_n}$, on target for first time relatively to $\pi_{n-1}$. Hence $$z_n(t)=v(t)x_{n-2}x_n^{a_n-1}.$$
We now multiply the extremities of the initial path by $x_n$. By Proposition~\ref{mu and sigma}, the cost of the new path is given as follows:
$$
u_0x_n^{t+1} \arrow{\sigma_n(\frac{w(t)}{x_{n-1}}x_n^{l(t)})}{2.5cm} v(t)x_n^{a_n+1}.
$$
Continuing from here, we have
\begin{equation}\label{path2}
v(t)x_n^{a_n+1} \arrow{x_n^{a_n+1}}{1.8cm} v(t)x_{n-1}^{a_n+1},
\end{equation}
with cumulated cost so far given by 
\begin{equation}\label{cost so far}
\sigma_n\big(\frac{w(t)}{x_{n-1}}x_n^{l(t)}\big)x_n^{a_n+1}.
\end{equation}
We will now simplify~\eqref{cost so far} as
\begin{equation}\label{simplification}
\sigma_n\big(\frac{w(t)}{x_{n-1}}x_n^{l(t)}\big)x_n^{a_n+1} = \frac{w(t+1)}{x_{n-1}}\cdot x_n^*
\end{equation}
using the next two claims.

\medskip
\noindent
\textbf{Claim 1}. $w(t+1)=\sigma_{n-1}(w(t))$.

Indeed, we have
$$
\begin{aligned}
w(t+1) &= \pi_{n-1}(\mg_n(u_0x_n^{t+1})) \\
&= \pi_{n-1}(\sigma_n(\mg_n(u_0x_n^{t}))) \ \textrm{ by Prop.~\ref{cor 6.9}}\\
&= \pi_{n-1}(\sigma_n(w(t)x_n^{f(t)})) \ \textrm{ by \eqref{eq mg(n-1)}} \\
&= \pi_{n-1}(\sigma_{n-1}(w(t))x_n^{f(t)+\deg w(t)}) \ \textrm{ by Lemma.~\ref{sigma n}}\\
&= \sigma_{n-1}(w(t)).
\end{aligned}
$$

\medskip
\noindent
\textbf{Claim 2}. $\sigma_{n-1}(\frac{w(t)}{x_{n-1}})=\frac{w(t+1)}{x_{n-1}}$.

Indeed, $w(t)=\frac{w(t)}{x_{n-1}}\cdot x_{n-1}$. Applying $\sigma_{n-1}$ to both sides, using Lemma~\ref{sigma is morphism}, and the formula $\sigma_{n-1}({x_{n-1}})=x_{n-1}$, we get
$$
\begin{aligned}
\sigma_{n-1}(w(t)) &= \sigma_{n-1}(\frac{w(t)}{x_{n-1}})\cdot \sigma_{n-1}(x_{n-1}) \\
&= \sigma_{n-1}(\frac{w(t)}{x_{n-1}})\cdot x_{n-1}.
\end{aligned}
$$
Hence 
\begin{equation}\label{sigma n-1}
\begin{aligned}
\sigma_{n-1}\big(\frac{w(t)}{x_{n-1}}\big)&=\frac{\sigma_{n-1}(w(t))}{x_{n-1}} \\
&= \frac{w(t+1)}{x_{n-1}} \ \textrm{ by Claim 1}.
\end{aligned}
\end{equation}
This settles Claim 2.

\smallskip
We are now in a position to prove~\eqref{simplification}:

$$
\begin{aligned}
\sigma_n\big(\frac{w(t)}{x_{n-1}}x_n^{l(t)}\big)x_n^{a_n+1} 
&= \frac{\sigma_{n-1}(w(t))}{x_{n-1}}x_n^{\deg(w(t))+l(t)+a_n} \ \textrm{ by \eqref{sigma n-1}} \\
&= \frac{w(t+1)}{x_{n-1}}\cdot x_n^{*}  \ \textrm{ by Claim 2}.
\end{aligned}
$$
Note that the cost given by~\eqref{simplification} is not yet on target relative to $\pi_{n-1}$. 
Going back to the path with that cost, we had reached $v(t)x_{n-1}^{a_n+1}$ in ~\eqref{path2}. Starting from there, the next step upwards is
$$
v(t)x_{n-1}^{a_n+1} \arrow{x_{n-1}}{0.8cm} v(t)x_{n-2}x_n^{a_n}.
$$
Now, the cumulated cost of the extended path is given by
$$
\frac{w(t+1)}{x_{n-1}}\cdot x_n^*\cdot x_{n-1} = w(t+1)\cdot x_n^*.
$$
This is the first time we reach the desired $(n-1)$-target $w(t+1)$. Hence $z_n(t+1)$ is given by the extremity of that extended path, namely $v(t)x_{n-2}x_n^{a_n+1}$. Hence
$$
\begin{aligned}
z_n(t+1) &= v(t)x_{n-2}x_n^{a_n} \\
&= v(t)x_{n-2}x_n^{a_n-1}\cdot x_n \\
&= z_n(t) \cdot x_n,
\end{aligned}
$$
as desired.
\end{proof}

\subsection{Partial conversions}
Given $v \in S_{m-1}$, we need to know the cost of the path from $vx_m^k$ to $vx_{m-1}^{\ell}x_m^{k-\ell}$ for $\ell \le k$, where we only partially convert some of the $x_m$ into $x_{m-1}$. Here is the required formula. 
\begin{proposition}\label{partial} Let $v \in S_{m-1}$ with $2 \le m \le n$. For any $1 \le \ell \le k$, we have
\begin{multline*}
\mu_n(vx_m^k,vx_{m-1}^{\ell}x_m^{k-\ell}) = x_m^\ell x_{m+1}^{\binom{k}{2}-\binom{k-\ell}{2}}x_{m+2}^{\binom{k+1}{3}-\binom{k+1-\ell}{3}}\cdots \\\ x_{n}^{\binom{k+n-m-1}{n-m+1}-\binom{k+n-m-1-\ell}{n-m+1}}.
\end{multline*}
\end{proposition}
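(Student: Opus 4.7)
I proceed by induction on $\ell \ge 0$, with the trivial base case $\ell = 0$ (both sides equal $1$). Two auxiliary facts are used throughout. The first is \emph{$v$-invariance}: the cost $\mu_n(vx_m^k, vx_{m-1}^\ell x_m^{k-\ell})$ depends only on $m, k, \ell, n$. Indeed, any monomial in the lex interval $L_n^*(vx_{m-1}^\ell x_m^{k-\ell}, vx_m^k)$ factors as $v \cdot w$ with $w$ supported on $\{x_{m-1}, \ldots, x_n\}$: the first $m-2$ exponents are forced to coincide with those of $v$ by comparison with the upper endpoint, and the $x_{m-1}$-exponent is forced to be at least that of $v$ by comparison with the lower endpoint. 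Hence elementary steps never touch the prefix $v$.

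The second auxiliary fact is the identity
\[
\mu_n(x_n^p, x_m^p) \;=\; \prod_{i=m+1}^{n} x_i^{\binom{p+i-m-1}{i-m}}. \qquad (\ast)
\]
To prove $(\ast)$, I partition the lex interval $L_n^*(x_m^p, x_n^p)$ by the $x_m$-exponent $a \in \{0, \ldots, p-1\}$: for each $a$, the monomials $x_m^a w$ in this piece are in bijection with monomials $w$ of degree $p-a$ in $x_{m+1}, \ldots, x_n$, and $\lambda(x_m^a w) = \lambda(w)$. A direct count shows that the $\maxgen$ of all degree-$d$ monomials in any $r$ variables $y_1, \ldots, y_r$ equals $\prod_{j=1}^{r} y_j^{\binom{d+j-2}{j-1}}$ (the exponent of $y_j$ counts degree-$d$ monomials in $y_1, \ldots, y_j$ divisible by $y_j$, equivalently, degree-$(d-1)$ monomials in $y_1, \ldots, y_j$). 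Applying this with $(y_1, \ldots, y_{n-m}) = (x_{m+1}, \ldots, x_n)$ and $d = p-a$, multiplying over $a \in \{0, \ldots, p-1\}$, and using the hockey stick identity $\sum_{a=0}^{p-1}\binom{a+r}{r} = \binom{p+r}{r+1}$ on each $x_i$-exponent then produces $(\ast)$.

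For the inductive step $\ell \to \ell+1$, I telescope
\[
\mu_n(vx_m^k, vx_{m-1}^{\ell+1}x_m^{k-\ell-1}) \;=\; \mu_n(vx_m^k, vx_{m-1}^\ell x_m^{k-\ell}) \cdot \mu_n(vx_{m-1}^\ell x_m^{k-\ell}, vx_{m-1}^{\ell+1}x_m^{k-\ell-1}).
\]
By $v$-invariance applied with prefix $vx_{m-1}^\ell \in S_{m-1}$, the second factor equals $\mu_n(v'x_m^q, v'x_{m-1}x_m^{q-1})$ with $q = k - \ell$, independently of the new prefix $v'$. Its first elementary step, by~\eqref{eq pred}, is $v'x_m^q \to v'x_{m-1}x_n^{q-1}$ with cost $x_m$; $v$-invariance again reduces the remaining cost to $\mu_n(x_n^{q-1}, x_m^{q-1})$, which by $(\ast)$ equals $\prod_{i=m+1}^n x_i^{\binom{k-\ell+i-m-2}{i-m}}$. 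Multiplying by the inductive formula at $\ell$ and applying Pascal's identity $\binom{N}{r+1} - \binom{N-1}{r+1} = \binom{N-1}{r}$ (with $N = k-\ell+i-m-1$ and $r = i-m$) on each $x_i$-exponent converts the formula at $\ell$ into the claimed formula at $\ell+1$. The main obstacle is the combinatorial bookkeeping in $(\ast)$; once that is in hand, the remaining argument is a routine telescoping combined with Pascal's identity.
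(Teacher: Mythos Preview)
Your proof is correct, but it proceeds along a different line than the paper's. The paper's argument is a two-line quotient: it factors the path
\[
vx_m^k \;\longrightarrow\; vx_{m-1}^{\ell}x_m^{k-\ell} \;\longrightarrow\; vx_{m-1}^{k},
\]
asserts that the costs of the full conversion $\mu_n(vx_m^k,vx_{m-1}^k)$ and of $\mu_n(vx_{m-1}^{\ell}x_m^{k-\ell},vx_{m-1}^k)$ are ``known'' (i.e.\ the case $\ell=k$, taken from~\cite{BE}), and reads off the partial-conversion cost as their quotient. Your approach is instead a self-contained induction on $\ell$: you establish the prefix-invariance of the cost, prove the closed form $(\ast)$ for $\mu_n(x_n^p,x_m^p)$ by an explicit $\maxgen$ count over the lexinterval, and then telescope one step at a time using Pascal's rule on each exponent.

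What each approach buys: the paper's proof is shorter but imports the $\ell=k$ formula as a black box from prior work. Your argument is longer but requires nothing external---indeed it re-derives the full-conversion formula as the special case $\ell=k$. Your $v$-invariance lemma and the identity $(\ast)$ are not isolated in the paper; they are implicit in the arrow calculus but you make them explicit, which is a cleaner foundation. One minor remark: your ``$v$-invariance'' is stated for intervals of the specific shape $[vx_m^k,\,vx_{m-1}^{\ell}x_m^{k-\ell}]$, but in the inductive step you also apply it to the interval $[v''x_n^{q-1},\,v''x_m^{q-1}]$ with $v'' \in S_{m-1}$; the same argument (forcing the first $m-1$ exponents to match those of $v''$) goes through verbatim, so this is harmless, but it would be worth stating the invariance once in the slightly more general form ``for $v \in S_{m-1}$ and $u_1 \le u_2$ supported on $x_m,\dots,x_n$, the cost $\mu_n(vu_1,vu_2)$ is independent of $v$''.
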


\begin{proof} By arrow composition, we have
$$
vx_m^k \arrow{}{1.5cm} vx_{m-1}^{\ell}x_m^{k-\ell} \arrow{}{1.5cm} vx_{m-1}^k.
$$
The respective costs of the long path and of the rightmost one are known. The cost of the leftmost one is given by their quotient, namely
$$
\mu_n(vx_m^k,vx_{m-1}x_m^{k-\ell}) = \mu_n(vx_m^k,vx_{m-1}^{k})/\mu_n(vx_{m-1}x_m^{k-\ell},vx_{m-1}^{k}).
$$
Applying the relevant formula for the numerator and the denominator yields the stated result.
\end{proof}

\begin{lemma}\label{power} Let $v_2 < v_1 \in S_{n,d}$. Then $\mu_n(v_2,v_1)=x_n^b$ for some $b \in \N$ if and only if $\deg_{x_n}(v_2) \ge b$.
\end{lemma}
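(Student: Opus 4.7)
The plan is to analyze the unique upward path from $v_2$ to $v_1$ step by step. Recall that each elementary step from a monomial $u$ to $\pred(u)$ has cost $\lambda(u)$, the largest variable dividing $u$, and that $\mu_n(v_2,v_1)$ is the product of these costs along the whole path. Hence $\mu_n(v_2,v_1)$ is a pure $x_n$-power $x_n^b$ precisely when every intermediate monomial on the path (up to but not including $v_1$) is divisible by $x_n$, i.e. every elementary step has cost exactly $x_n$.

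The main step is to describe the iterated predecessors of $v_2$ along steps of cost $x_n$. Writing $v_2 = w\,x_n^a$ with $w \in S_{n-1}$ and $a=\deg_{x_n}(v_2)$, I would use formula~\eqref{eq pred} with $m=n$ to prove by induction on $i$ that
$$
\pred^i(v_2) \;=\; w\,x_{n-1}^i\,x_n^{a-i} \qquad (0 \le i \le a),
$$
with each of the first $a$ elementary steps of cost exactly $x_n$. The point is that at every stage the current monomial has the form $u_0\,x_n^{a-i}$ with $u_0 = w\,x_{n-1}^i \in S_{n-1}$, so~\eqref{eq pred} keeps applying with $m=n$. The induction stops at $i=a$, where $\pred^a(v_2) = w\,x_{n-1}^a$ is no longer divisible by $x_n$; if $a<d$, the next elementary step would have cost $\lambda(w\,x_{n-1}^a) \ne x_n$.

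With this description, both directions are immediate. For the forward implication, if $\mu_n(v_2,v_1)=x_n^b$ then all $b$ elementary steps along the path have cost $x_n$, so $b$ cannot exceed the maximum number of such steps, giving $b \le a = \deg_{x_n}(v_2)$. For the converse, if $\deg_{x_n}(v_2) \ge b$, then setting $v_1 = \pred^b(v_2)$ — which lies strictly above $v_2$ as soon as $b \ge 1$ — the same computation shows each of the $b$ elementary steps has cost $x_n$, whence $\mu_n(v_2,v_1)=x_n^b$.

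There is no serious obstacle; the lemma is essentially a bookkeeping exercise built directly on the explicit predecessor formula~\eqref{eq pred}, and the only care needed is to regroup the newly created $x_{n-1}$ factor into the $S_{n-1}$-part at each iteration so that~\eqref{eq pred} remains applicable with $m=n$.
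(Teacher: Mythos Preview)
Your proof is correct and follows essentially the same approach as the paper: write $v_2=w\,x_n^a$ with $w\in S_{n-1}$, show by the predecessor formula that $\pred^i(v_2)=w\,x_{n-1}^i x_n^{a-i}$ for $0\le i\le a$ so that the first $a$ elementary steps each cost $x_n$ while the next one does not, and read off both implications from this. The paper's argument is the same computation, stated more tersely and without spelling out the converse direction explicitly.
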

\begin{proof} Let $a=\deg_{x_n}(v_2)$, so that $v_2=v_2'x_n^a$ with $v_2' \in S_{n-1}$. Then $\pred^i(v_2)=v_2'x_{n-1}^ix_n^{a-i}$ for all $0 \le i \le a$. In particular, for $i=a$, the monomial $\pred^a(v_2)$ has max index equal to $n-1$. Hence $\mu_n(v_2, \pred^{a+1}(v_2))=x_{n-1}x_n^a$. Therefore $v_1 < \pred^{a+1}(v_2)$, i.e. $v_1 = \pred^{j}(v_2)$ for some $j \le a$.
\end{proof}

\subsection{The polynomials $f(t), h(t), k(t)$}\label{sec h and k}

Let $u_0 \in S_{n-1}$ with $n \ge 3$. Having already attached to $u_0$ the polynomial $f(t)$ in Section~\ref{sec f(t)}, we will now attach to it two more polynomials $h(t), k(t)$ involved in our formula for $\tau_n(u_0)$ in Theorem~\ref{thm tau(u0)}.

\begin{notation} Let $u_0 \in S_{n-1}$. For $t \ge 0$, let us denote
$$
w(t) = \pi_{n-1}(\mg_n(u_0x_n^t)).
$$
Recall that we refer to $w(t) \in S_{n-1}$ as the \emph{$(n-1)$-target}.
\end{notation}
Hence
\begin{equation}\label{eq f(t)}
\mg_n(u_0x_n^t)) = w(t) x_n^{f(t)},
\end{equation}
where $f(t) = \deg_{x_n} \mg_n(u_0x_n^t)$ was shown to be a polynomial in $t$ in Section~\ref{sec f(t)}.

\smallskip
We have seen in Section~\ref{sec z(t)} that for $t \ge \tau_{n-1}(u_0)$, there exists a smallest monomial $z_n(t) \ge u_0x_n^t$ such that 
\begin{equation}\label{eq z(t)}
\pi_{n-1}(\mu_n(u_0x_n^t,z_n(t)))=w(t).
\end{equation}
\begin{definition}\label{def h,k} With $u_0 \in S_{n-1}$ and the above notation, we define $h(t), k(t)$ for $t \ge \tau_{n-1}(u_0)$ as follows:
\begin{equation}\label{eq k(t)}
\begin{aligned}
h(t) &= \deg_{x_n} \mu_n(u_0x_n^t,z_n(t)), \\
k(t) &= \deg_{x_n} z_n(t).
\end{aligned}
\end{equation}
\end{definition}

We will show that $h(t)$ and $k(t)$ are polynomials in $t$. Let us first compare $f(t)$ and $h(t)$.

\begin{proposition}\label{prop Delta f} Let $u_0 \in S_{n-1}$. Then for all $t \ge \tau_{n-1}(u_0)$, we have
$$
f(t+1)-f(t) = \deg(w(t)) = h(t+1)-h(t).
$$
\end{proposition}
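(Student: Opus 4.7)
The plan is to derive both equalities by applying Corollary~\ref{cor 6.9} (respectively Theorem~\ref{thm main}) to shift $t \mapsto t+1$, then read off the exponent of $x_n$ and the $\pi_{n-1}$-truncation from the resulting identity via Lemma~\ref{sigma n}. The two cases are parallel.

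First I would handle $f(t+1) - f(t)$. Starting from $\mg_n(u_0x_n^t) = w(t)x_n^{f(t)}$, Corollary~\ref{cor 6.9} gives
$$\mg_n(u_0x_n^{t+1}) = \sigma_n(\mg_n(u_0x_n^t)) = \sigma_n(w(t)x_n^{f(t)}).$$
Since $w(t) \in S_{n-1}$ (so $w(t)$ has $x_n$-degree $0$), applying Lemma~\ref{sigma n} to $w(t)\cdot x_n^{f(t)}$ yields
$$\sigma_n(w(t)x_n^{f(t)}) = \sigma_{n-1}(w(t))\cdot x_n^{f(t)+\deg(w(t))}.$$
Comparing with $\mg_n(u_0x_n^{t+1}) = w(t+1)x_n^{f(t+1)}$ and using that $\sigma_{n-1}(w(t)) \in S_{n-1}$, the uniqueness of this decomposition forces $w(t+1) = \sigma_{n-1}(w(t))$ and $f(t+1) = f(t) + \deg(w(t))$, giving the first equality.

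Next I would handle $h(t+1) - h(t)$, which is where the hypothesis $t \ge \tau_{n-1}(u_0)$ is needed (to guarantee that $z_n(t)$ and $z_n(t+1)$ both exist, and that Theorem~\ref{thm main} applies). Set $M(t) = \mu_n(u_0x_n^t, z_n(t))$. By the defining property~\eqref{eq z(t)} of $z_n(t)$, we have $\pi_{n-1}(M(t)) = w(t)$, so $M(t) = w(t)x_n^{h(t)}$. Theorem~\ref{thm main} gives $z_n(t+1) = z_n(t)x_n$, and then Proposition~\ref{mu and sigma} yields
$$M(t+1) = \mu_n(u_0x_n^t\cdot x_n,\, z_n(t)\cdot x_n) = \sigma_n(M(t)) = \sigma_n(w(t)x_n^{h(t)}).$$
Applying Lemma~\ref{sigma n} exactly as before,
$$M(t+1) = \sigma_{n-1}(w(t))\cdot x_n^{h(t)+\deg(w(t))} = w(t+1)\cdot x_n^{h(t)+\deg(w(t))},$$
using the identity $w(t+1) = \sigma_{n-1}(w(t))$ established in the first step. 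Comparing with $M(t+1) = w(t+1)x_n^{h(t+1)}$ gives $h(t+1) = h(t) + \deg(w(t))$, which is the second equality.

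There is no real obstacle here: once Theorem~\ref{thm main} delivers the key recursion $z_n(t+1) = z_n(t)x_n$, the proof is a direct bookkeeping computation. The only subtlety worth flagging is that both arguments depend on the same simple factorization lemma (Lemma~\ref{sigma n}), applied once to the target $\mg_n(u_0x_n^t)$ and once to the cost $\mu_n(u_0x_n^t, z_n(t))$, and it is precisely this parallel that forces the common value $\deg(w(t))$ in the middle of the stated chain of equalities.
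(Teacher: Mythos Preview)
Your proof is correct and follows essentially the same approach as the paper's own proof: both arguments apply $\sigma_n$ to the decomposition $w(t)x_n^{f(t)}$ (respectively $w(t)x_n^{h(t)}$) via Corollary~\ref{cor 6.9} (respectively Theorem~\ref{thm main} and Proposition~\ref{mu and sigma}), then invoke Lemma~\ref{sigma n} to read off the new $x_n$-exponent. The paper's version leaves the appeal to Lemma~\ref{sigma n} implicit, but the computation is identical.
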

\begin{proof}
First for $f(t)$. On the one hand, we have 
$$\mg_n(u_0x_n^{t+1})=w(t+1)x_n^{f(t+1)}.$$
On the other hand, 
$$
\begin{aligned}
\mg_n(u_0x_n^{t+1}) &=\sigma_n(\mg_n(u_0x_n^{t})) \\
&= \sigma_n(w(t)x_n^{f(t)}) \\
&= \sigma_{n-1}(w(t))x_n^{f(t)+\deg(w(t))}.
\end{aligned}
$$
Since $\sigma_{n-1}(w(t)) \in S_{n-1}$, it follows by comparing the exponents of $x_n$ in these expressions for $\mg_n(u_0x_n^{t+1})$ that $f(t+1)=f(t)+\deg(w(t))$, as claimed.

\smallskip
Now for $h(t)$. On the one hand, we have 
$$
\mu_n(u_0x_n^{t+1},z_n(t+1)) = w(t+1)x_n^{h(t+1)}.
$$
On the other hand, we proved above that $z_n(t+1)=z_n(t)x_n$. Hence
$$
\begin{aligned}
\mu_n(u_0x_n^{t+1},z_n(t+1)) &= \mu_n(u_0x_n^{t}x_n,z_n(t)x_n) \\
&= \sigma_n(\mu_n(u_0x_n^{t},z_n(t))) \\
&= \sigma_n(w(t)x_n^{h(t)}) \\
&= \sigma_{n-1}(w(t))x_n^{h(t)+\deg(w(t))}.
\end{aligned}
$$
Again, since $\sigma_{n-1}(w(t)) \in S_{n-1}$, and by comparing the exponent of $x_n$ on these expressions of $\mu_n(u_0x_n^{t+1},z_n(t+1))$, we find that $h(t+1)=h(t)+\deg(w(t))$, as claimed.
\end{proof}

\begin{corollary} The functions $h(t)$ and $k(t)$ associated to $u_0 \in S_{n-1}$ are polynomials in $t$ for $t \ge \tau_{n-1}(u_0)$.
\end{corollary}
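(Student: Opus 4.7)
The plan is to derive both polynomiality assertions directly from the recurrences already established, without any further computation.

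For $k(t)$, I would invoke Theorem~\ref{thm main}, which asserts that $z_n(t+1) = z_n(t)\, x_n$ for every $t \ge \tau_{n-1}(u_0)$. Taking $\deg_{x_n}$ on both sides gives the one-step recurrence $k(t+1) = k(t) + 1$. Iterating from $t = \tau_{n-1}(u_0)$, one obtains
\[
k(t) \;=\; k\bigl(\tau_{n-1}(u_0)\bigr) + t - \tau_{n-1}(u_0),
\]
so $k(t)$ is an affine (degree one) polynomial in $t$ on the stated range.

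For $h(t)$, the key observation is that Proposition~\ref{prop Delta f} supplies \emph{two} recurrences with the same right-hand side:
\[
f(t+1) - f(t) \;=\; \deg(w(t)) \;=\; h(t+1) - h(t).
\]
Subtracting, the function $h(t) - f(t)$ is constant on $\{t \ge \tau_{n-1}(u_0)\}$, so
\[
h(t) \;=\; f(t) + \bigl(h(\tau_{n-1}(u_0)) - f(\tau_{n-1}(u_0))\bigr).
\]
Since Theorem~\ref{thm f(t)} exhibits $f(t)$ as an explicit $\N$-linear combination of binomial coefficients $\binom{t + r - k - 2 + n - i_{k+1}}{n - 1 - i_{k+1}}$, each of which is a polynomial in $t$, the function $f(t)$ is a polynomial in $t$ for every $u_0 \in S_{n-1}$. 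Therefore $h(t)$ is a polynomial in $t$ as well.

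There is no real obstacle here: both assertions are formal consequences of results already proved, and the only point worth double-checking is that the polynomiality of $f(t)$ in Theorem~\ref{thm f(t)} is valid regardless of whether $x_1$ divides $u_0$ (the corollary after Theorem~\ref{thm f(t)} bounds the degree of $f$ only under the extra hypothesis that $x_1 \nmid u_0$, but polynomiality itself holds in full generality from the explicit binomial formula).
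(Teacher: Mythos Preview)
Your proof is correct and follows essentially the same route as the paper: for $k(t)$ you use $z_n(t+1)=z_n(t)x_n$ to get $k(t+1)=k(t)+1$, exactly as the paper does; for $h(t)$ you directly observe that $h(t)-f(t)$ is constant, which is the same content as the paper's discrete-primitive argument (and is indeed the formulation the paper itself invokes later in Theorem~\ref{thm tau(u0)}).
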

\begin{proof} Since $f(t)$ is a polynomial in $t$ as shown in Section~\ref{sec f(t)}, the function $t \mapsto f(t+1)-f(t)$, i.e. the discrete derivative of $f(t)$, is also a polynomial in $t$, of degree one less than the degree of $f(t)$. Since the map $t \mapsto h(t+1)-h(t)=f(t+1)-f(t)$ is a polynomial in $t$, it follows that its discrete primitive $h(t)$ is itself a polynomial in $t$, of degree one more, i.e. of the same degree as $f(t)$. Finally, since $z_n(t+1)=z_n(t)x_n$ for $t$ large enough, and since $k(t)=\deg_{x_n} z(t)$, we have $k(t+1)-k(t)=1$ for $t$ large enough. Hence $k(t)$ is a polynomial in $t$ of degree $1$ with dominant coefficient $1$.
\end{proof}

\subsection{Determining $\tau_n(u_0)$}

Let $u_0 \in S_{n-1}$ with $n \ge 3$. We now use the above polynomials $f(t), h(t), k(t)$ attached to $u_0$ and~\eqref{eq f(t)},~\eqref{eq z(t)},~\eqref{eq k(t)} to determine when $u_0x_n^t$ is a Gotzmann monomial in $R_n$.

\begin{theorem}\label{thm k(t)} Let $u_0 \in S_{n-1}$ with $n \ge 3$. Let $t \in \N$.
Then $u_0x_n^t$ is a Gotzmann monomial in $R_n$ if and only if
$$
k(t) \ge f(t) - h(t).
$$
\end{theorem}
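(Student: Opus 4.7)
The plan is to combine the Gotzmann criterion of Corollary~\ref{equiv gotzmann} with the minimality property defining $z_n(t)$ and the characterization of pure $x_n$-power path costs given by Lemma~\ref{power}. By Corollary~\ref{equiv gotzmann}, the task is to decide whether there exists $v \ge u_0 x_n^t$ with $\mu_n(u_0 x_n^t, v) = \mg_n(u_0 x_n^t) = w(t) x_n^{f(t)}$, and the strategy is to reduce this existence problem to an elementary inequality on $x_n$-exponents.

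First, I would argue that any such witness $v$ necessarily satisfies $v \ge z_n(t)$. Indeed, applying $\pi_{n-1}$ to the putative Gotzmann equality gives $\pi_{n-1}(\mu_n(u_0 x_n^t, v)) = w(t)$, which by Definition~\ref{def z(t)} forces $v \ge z_n(t)$. Using the multiplicativity of $\mu_n$ along the linearly ordered upward path in $S_{n,d}$, the residual cost then computes as
$$
\mu_n(z_n(t), v) \;=\; \frac{\mu_n(u_0 x_n^t, v)}{\mu_n(u_0 x_n^t, z_n(t))} \;=\; \frac{w(t) x_n^{f(t)}}{w(t) x_n^{h(t)}} \;=\; x_n^{f(t) - h(t)},
$$
so the existence of a Gotzmann witness becomes equivalent to the existence of $v \ge z_n(t)$ whose residual path cost from $z_n(t)$ is the pure $x_n$-power $x_n^{f(t) - h(t)}$.

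The decisive step is to invoke Lemma~\ref{power} with $v_2 = z_n(t)$ and exponent $b = f(t) - h(t)$: such a $v > z_n(t)$ exists if and only if $\deg_{x_n}(z_n(t)) \ge f(t) - h(t)$, i.e.\ $k(t) \ge f(t) - h(t)$. The boundary case $f(t) = h(t)$ is covered by the witness $v = z_n(t)$ itself, with $k(t) \ge 0 = f(t)-h(t)$ automatic. The converse direction is symmetric: given $k(t) \ge f(t) - h(t)$, Lemma~\ref{power} yields a suitable $v \ge z_n(t)$, and recomposing the path gives $\mu_n(u_0 x_n^t, v) = w(t) x_n^{f(t)} = \mg_n(u_0 x_n^t)$, whence $u_0 x_n^t$ is Gotzmann by Corollary~\ref{equiv gotzmann}.

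The main obstacle I anticipate is confirming $f(t) \ge h(t)$ a priori, so that $x_n^{f(t) - h(t)}$ is a legitimate monomial and Lemma~\ref{power} applies as stated. This should be intrinsic to the construction of $z_n(t)$ in Theorem~\ref{thm main}: $z_n(t)$ is defined as the earliest point on the upward path from $u_0 x_n^t$ at which the $(n-1)$-truncated cumulated cost first matches the $(n-1)$-target $w(t)$, and the $x_n$-exponent $h(t)$ accumulated along this partial path should not exceed the $x_n$-exponent $f(t)$ of the full target $\mg_n(u_0 x_n^t)$ toward which the path is being extended.
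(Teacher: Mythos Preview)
Your proposal is correct and follows essentially the same approach as the paper. The only cosmetic difference is that the paper names the Gotzmann witness explicitly as $\tilde{u}(t)$ (the cogap endpoint from Notation~\ref{nota u tilde}) and closes the converse via Theorem~\ref{thm VB} rather than Corollary~\ref{equiv gotzmann}, but the logical skeleton---minimality of $z_n(t)$ to force $v \ge z_n(t)$, quotienting the path costs to isolate $x_n^{f(t)-h(t)}$, and invoking Lemma~\ref{power}---is identical; your anticipated obstacle $f(t)\ge h(t)$ is likewise left implicit in the paper's argument.
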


\begin{proof} Given $t \ge 0$, let  $W(t)=\mg_n(u_0x_n^t)$. Thus 
$$
W(t) = w(t) x_n^{f(t)}.
$$
As in \cite{BE}, denote $\tilde{u}(t)=\widetilde{u_0x_n^t} \in S_n$ the unique monomial such that $|L(\tilde{u}(t))|=|B(u_0x_n^t)|$, i.e. 
\begin{equation}\label{u tilda}
\tilde{u}(t) = \pred^{|\gaps(u_0x_n^t)|}(u_0x_n^t).
\end{equation}
 
\smallskip
Assume first that $u_0x_n^t$ is a Gotzmann monomial in $R_n$. Then we have 
$$
W(t)=\mg_n(u_0x_n^t) = \mg_n(L^*(\tilde{u}(t), u_0x_n^t)).
$$
By the minimality of $z_n(t)$, we have
$$
z_n(t) \le \tilde{u}(t).
$$
By arrow composition, the arrow
$$
u_0x_n^t \arrow{\mu_n(u_0x_n^t,\tilde{u}(t))}{3cm} \tilde{u}(t)
$$
decomposes as
$$
u_0x_n^t \arrow{\mu_n(u_0x_n^t,z_n(t))}{2.3cm} z_n(t) \arrow{\mu_n(z_n(t),\tilde{u}(t))}{2.3cm} \tilde{u}(t).
$$
Hence 
\begin{equation}\label{quotient}
\mu_n(z_n(t),\tilde{u}(t)) = \frac{\mu_n(u_0x_n^t,\tilde{u}(t))}{\mu_n(u_0x_n^t,z_n(t))}.
\end{equation}
Now, we have 
\begin{eqnarray*}
\mu_n(u_0x_n^t,\tilde{u}(t)) & = & w_0(t)x_n^{f(t)}, \\
\mu_n(u_0x_n^t,z_n(t)) & = & w_0(t)x_n^{h(t)}.
\end{eqnarray*}
Hence, by \eqref{quotient}, we get
$$\mu_n(z_n(t),\tilde{u}(t)) = x_n^{f(t)-h(t)}.$$
It follows from Lemma~\ref{power} that $\deg_{x_n}z_n(t) =k(t) \ge f(t)-h(t)$.

\medskip
Conversely, assume $k(t) \ge f(t)-h(t)$. Hence $z_n(t)$ is divisible by $x_n^{f(t)-h(t)}$. By Lemma~\ref{power} again, taking as many predecessors, we have
$$
z_n(t) \arrow{x_n^{f(t)-h(t)}}{2.3cm} v(t)
$$
for some monomial $v(t) \in S_n$. Thus we have
$$
u_0x_n^t \arrow{w(t)x_n^{h(t)}}{2.1cm} z_n(t) \arrow{x_n^{f(t)-h(t)}}{2.1cm} v(t),
$$
which by arrow composition yields
$$
u_0x_n^t \arrow{w(t)x_n^{f(t)}}{2.5cm} v(t).
$$
Let $g(t)=\deg(w(t)x_n^{f(t)})$. Hence $v(t)=\pred^{g(t)}(u_0x_n^t)$. Since $w(t)x_n^{f(t)}=W(t)=\mg_n(u_0x_n^t)$, it follows that
$$g(t)=|\gaps(u_0x_n^t)|.$$ 
Hence $v(t)=\tilde{u}(t)$ by definition of $\tilde{u}(t)$. Recall that $L^*(\tilde{u}(t),u_0x_n^t)=\cogaps(u_0x_n^t)$ by definition. Therefore
$$
\mc_n(u_0x_n^t)=\maxgen(L^*(\tilde{u}(t),u_0x_n^t)) = w(t)x_n^{f(t)}.
$$
We conclude that
$$
\mc_n(u_0x_n^t)=\mg_n(u_0x_n^t).
$$
Hence $u_0x_n^t$ is a Gotzmann monomial in $R_n$, as desired.
\end{proof}

We are now in a position to determine the Gotzmann threshold of $u_0$ in $R_{n}$.
\begin{theorem}\label{thm tau(u0)} Let $u_0 \in S_{n-1}$ with $n\ge 3$. Let $f(t),h(t),k(t)$ be the polynomials attached to $u_0$. Then $\tau_n(u_0)=f(0)-h(0)-k(0)$.
\end{theorem}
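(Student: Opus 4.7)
The plan is to derive the formula directly from Theorem \ref{thm k(t)}, which states that $u_0 x_n^t$ is a Gotzmann monomial in $R_n$ if and only if $k(t) \ge f(t) - h(t)$. The goal is to rewrite this criterion as a linear condition $t \ge f(0) - h(0) - k(0)$, after which the uniqueness in Gotzmann's Theorem \ref{thm Gotzmann} will force $\tau_n(u_0) = f(0) - h(0) - k(0)$.

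The rewriting rests on two polynomial identities. First, Proposition \ref{prop Delta f} gives $f(t+1) - f(t) = h(t+1) - h(t)$ for all $t \ge \tau_{n-1}(u_0)$, so the discrete derivative of the polynomial $f - h$ vanishes on an infinite set; hence $f - h$ is a constant polynomial, equal to the value $f(0) - h(0)$. Second, Theorem \ref{thm main} gives $z_n(t+1) = z_n(t) x_n$ for all $t \ge \tau_{n-1}(u_0)$, and since $k(t) = \deg_{x_n} z_n(t)$ this yields $k(t+1) - k(t) = 1$ on the same infinite set; hence the polynomial $k(t)$ has discrete derivative identically equal to $1$, so $k(t) = t + k(0)$.

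Substituting these two identities into Theorem \ref{thm k(t)}'s criterion $k(t) \ge f(t) - h(t)$ converts it into $t + k(0) \ge f(0) - h(0)$, equivalently $t \ge f(0) - h(0) - k(0)$. By Theorem \ref{thm Gotzmann}, the set of $t \in \N$ with $u_0 x_n^t$ Gotzmann is precisely $\{t : t \ge \tau_n(u_0)\}$, and matching this with the characterization just obtained yields $\tau_n(u_0) = f(0) - h(0) - k(0)$.

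The main subtlety I anticipate is the legitimacy of evaluating $h$ and $k$ at $t = 0$, since $0$ may lie well below $\tau_{n-1}(u_0)$, the range in which $z_n(t)$ was originally constructed in Section~\ref{sec z(t)}. This is resolved by the corollary in Section~\ref{sec h and k}, which guarantees that $f$, $h$ and $k$ are honest polynomials in $t$; the equalities $f(t) - h(t) = f(0) - h(0)$ and $k(t) = t + k(0)$ then hold as identities of polynomials, and only their agreement with the Gotzmann criterion for all sufficiently large $t$, together with the uniqueness in Theorem \ref{thm Gotzmann}, is needed to pin down $\tau_n(u_0)$.
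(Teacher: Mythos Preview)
Your proposal is correct and follows essentially the same route as the paper's own proof: invoke Theorem~\ref{thm k(t)}, use Proposition~\ref{prop Delta f} to see that $f-h$ is constant, use Theorem~\ref{thm main} to see that $k(t)=t+k(0)$, and then read off $\tau_n(u_0)$ from Theorem~\ref{thm Gotzmann}. Your explicit remark about extending $h$ and $k$ as polynomials to justify evaluation at $t=0$ is a welcome clarification, and your citation of Theorem~\ref{thm main} for the relation $z_n(t+1)=z_n(t)x_n$ is in fact the precise reference.
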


\begin{proof} Assume $t \ge \tau_{n-1}(u_0)$.

\smallskip
\noindent
\textbf{Step 1.} Then $z_n(t+1)=z_n(t)x_n$ by Proposition~\ref{prop w(t)}.

\smallskip
\noindent
\textbf{Step 2.} We have $k(t)=t+k(0)$ for some constant $k(0)$. Indeed, $k(t) = \deg_{x_n}z_n(t)$ by Definition~\ref{def h,k}. It follows from Step 1 that $k(t+1)=k(t)+1$ for $t \ge \tau_{n-1}(u_0)$. Hence, viewing $k(t)$ as a polynomial in $t$, it is of degree $1$ with dominant coefficient $1$. Thus $k(t)=t+k(0)$ for some constant $k(0)$.

\smallskip
\noindent
\textbf{Step 3.}  By Theorem~\ref{thm k(t)}, we have that $u_0x_n^t$ is a Gotzmann monomial in $R_n$ if and only if $k(t) \ge f(t)-h(t)$.

\smallskip
\noindent
\textbf{Step 4.} By Proposition~\ref{prop Delta f}, we have
$f(t+1)-f(t)=h(t+1)-h(t)=\deg(w(t))$.

\smallskip
\noindent
\textbf{Step 5.} By Step 4, it follows that both $f(t)$ and $h(t)$ are polynomials in $t$, since $\deg(w(t))$ is a polynomial in $t$ by definition of $w(t)$ and Corollary~\ref{cor pi mgn}. Hence the expressions $f(0),h(0)$ are well defined, and $f(t)-h(t)=f(0)-h(0)$ by Step 4 again.

\smallskip
\noindent
\textbf{Step 6.} By Step 3, $u_0x_n^t$ is a Gotzmann monomial in $R_n$ if and only if $k(t) \ge f(t)-h(t)$. By Step 5, this holds if and only if $k(t) \ge f(0)-h(0)$, and by Step 2, this holds if and only if $t+k(0) \ge f(0)-h(0)$. Hence $\tau_n(u_0) = f(0)-h(0)-k(0)$ by Theorem~\ref{thm Gotzmann}. This concludes the proof of the theorem.
\end{proof}

\subsection{An example} Let $n=5$ and $u_0=x_2^2x_4 \in S_4$. As a first illustration of the method, before its application to the more demanding computation of $\tau_5(x_2^d)$ in Section~\ref{sec x2}, let us show that $\tau_5(u_0)=6$. We do so by determining the attached polynomials $f(t), h(t), k(t)$ and then using the formula $\tau_5(u_0)=f(0)-h(0)-k(0)$.

\smallskip
\noindent
$\bullet$ Applying Theorem~\ref{thm 5.19} to $u_0=x_2^2x_4=x_{i_1}x_{i_2}\cdots x_{i_d}$ with $d=3$ and $i_1=i_2=2, i_3=4$, we get
$$
\mg_5(x_2^2x_4)=x_3x_4^2x_5^5.
$$
\smallskip
\noindent
$\bullet$ Corollary~\ref{cor mg t} then implies 
$$
\mg_5(x_2^2x_4x_5^t)=x_3x_4^{t+2}x_5^{\binom{t+1}2+2t+5}.
$$
Hence $f(t)=\binom{t+1}2+2t+5$.

\smallskip
\noindent
$\bullet$ We have $w(t)=\pi_4(\mg_5(x_2^2x_4x_5^t))=x_3x_4^{t+2}$. So, we now seek the smallest monomial $z_5(t) \ge x_2^2x_4x_5^t$ such that
\begin{equation}\label{eq target}
\pi_4(\mu_5(u_0x_5^t,z_5(t)))=x_3x_4^{t+2}.
\end{equation}
To do that, we begin the upward path starting from $u_0x_5^t=x_2^2x_4x_5^t$ and stop just before attaining the $4$-target $x_3x_4^{t+2}$ given by~\eqref{eq target}: 
$$
\begin{aligned}
& x_2^2x_4x_5^t \arrow{x_5^t}{0.5cm} x_2^2x_4^{t+1} \arrow{x_4}{0.5cm} x_2^2x_3x_5^{t} \arrow{x_5^t}{0.5cm} x_2^2x_3x_4^{t} \arrow{x_4}{0.5cm} x_2^2x_3^2x_5^{t-1} \arrow{x_5^{t-1}}{0.5cm} \\
& x_2^2x_3^2x_4^{t-1} \arrow{x_4}{0.5cm} x_2^2x_3^3x_5^{t-2} \arrow{x_5^{t-2}}{0.5cm} \dots \arrow{x_5}{0.5cm} x_2^2x_3^tx_4 \arrow{x_4}{0.5cm} x_2^2x_3^{t+1} \arrow{x_3}{0.5cm} \\
& x_2^3x_5^t \arrow{x_5^t}{0.5cm} x_2^3x_4^t.
\end{aligned}
$$
At this point, the cost of the upward path from $x_2^2x_4x_5^t$ to $x_2^3x_4^t$, i.e. the product of the costs of each arrow, is given by
$$
\mu_5(x_2^2x_4x_5^t,x_2^3x_4^t) = x_3x_4^{t+1}x_5^{t+t+(t-1)+\dots+1}.
$$
That is, mapping $x_5$ to $1$, we obtain
$$
\pi_4(\mu_5(x_2^2x_4x_5^t,x_2^3x_4^t)) = x_3x_4^{t+1},
$$
just below the $4$-target~\eqref{eq target}. Reaching that target for the first time is achieved with one more elementary step:
$$
x_2^3x_4^t \arrow{x_4}{0.5cm} x_2^3x_3x_5^{t-1}.
$$
Hence $x_2^3x_3x_5^{t-1}$ is the smallest monomial such that $x_2^3x_3x_5^{t-1} \ge x_2^2x_4x_5^t$ and
$$
\pi_4(\mu_5(u_0x_5^t,x_2^3x_3x_5^{t-1}))=x_3x_4^{t+2}
$$
as required by~\eqref{eq target}. We conclude that
$$
z_5(t) = x_2^3x_3x_5^{t-1}.
$$

\smallskip
\noindent
$\bullet$ We are now in a position to determine the polynomials $h(t), k(t)$:
$$
h(t) = \deg_{x_5}\mu_5(x_2^2x_4x_5^t,z_5(t))=\binom{t+3}2-3,
$$
and $k(t)=\deg_{x_5} z(t) = t-1$. Hence $\tau_5(x_2^2x_4)=f(0)-h(0)-k(0)=6$.

\section{Application: the case $u_0=x_2^d$}\label{sec x2}
We now apply the above theory to determine the Gotzmann threshold of the monomial $u_0 = x_2^d$ in $S_5$ as a function of $d$. While our method works for any monomial in $S_{n-1}$, this particular case already concentrates all the complexity of the task while avoiding formulas with too many parameters as the one in~\eqref{eq tau4} for $\tau_4(x_1^ax_2^bx_3^c)$.
\begin{proposition} For all $n \ge 3$, we have
$$
\mg_n(x_2^d)=x_3^{\binom{d}{2}}x_4^{\binom{d+1}{3}}x_5^{\binom{d+2}{4}}\cdots x_n^{\binom{d+n-3}{n-1}}.
$$
\end{proposition}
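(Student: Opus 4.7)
The plan is to apply Theorem~\ref{thm 5.19} directly to $u = x_2^d$, which we read as $u = x_{i_1}\cdots x_{i_d}$ with $i_1 = \cdots = i_d = 2$. For each $k$ with $1 \le k \le d-1$, the Borel closure of $x_2^k$ consists exactly of the monomials of degree $k$ in the two variables $x_1, x_2$, so $|B(x_2^k)| = k+1$ and $|B(x_2^k)|-1 = k$. Since $i_{k+1} = 2$ for all $k$, the inner product in Theorem~\ref{thm 5.19} ranges over $j = 3, \ldots, n$. Substituting and collecting the powers of each variable $x_j$ yields
$$
\mg_n(x_2^d) \;=\; \prod_{j=3}^n x_j^{\,e_j(d)}, \qquad e_j(d) \;=\; \sum_{k=1}^{d-1} k\binom{d-k+j-4}{j-3},
$$
after using the symmetry $\binom{a}{b}=\binom{a}{a-b}$ on the binomial coefficient.

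The proposition therefore reduces to the combinatorial identity
$$
\sum_{k=1}^{d-1} k\binom{d-k+j-4}{j-3} \;=\; \binom{d+j-3}{j-1}, \qquad 3 \le j \le n.
$$
The substitution $m = d-1-k$ rewrites the left-hand side as $\sum_{m=0}^{d-2}(d-1-m)\binom{m+j-3}{j-3}$. I would then invoke the standard identity
$$
\sum_{m=0}^{N}(N+1-m)\binom{m+r}{r} \;=\; \binom{N+r+2}{r+2},
$$
which follows from two applications of the hockey-stick identity: writing $N+1-m = \sum_{\ell=m}^{N} 1$ and swapping the order of summation gives $\sum_{\ell=0}^N \sum_{m=0}^{\ell}\binom{m+r}{r} = \sum_{\ell=0}^{N}\binom{\ell+r+1}{r+1}$, which equals $\binom{N+r+2}{r+2}$. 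Specialising to $N = d-2$ and $r = j-3$ produces exactly $\binom{d+j-3}{j-1}$, as required.

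The only step with any real content is this combinatorial identity; the rest is bookkeeping driven by Theorem~\ref{thm 5.19}. As a sanity check, $e_3(d) = \sum_{k=1}^{d-1} k = \binom{d}{2}$ and $e_4(d) = \sum_{k=1}^{d-1} k(d-k) = \binom{d+1}{3}$, matching the two leading exponents in the claimed formula. An alternative route would be induction on $n$, using formula~\eqref{exponent of xn} to compute only the exponent of $x_n$ at each step, but this reduces to precisely the same identity, so the direct approach via Theorem~\ref{thm 5.19} is cleaner.
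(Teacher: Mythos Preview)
Your proof is correct and follows essentially the same approach as the paper: apply Theorem~\ref{thm 5.19} to $x_2^d$ and reduce to the summation identity $\sum_{k=1}^{d-1} k\binom{d-k+j-4}{j-3} = \binom{d+j-3}{j-1}$, which is exactly the formula the paper invokes (with $j$ in place of $n$). The only difference is cosmetic: the paper alludes to induction on $d$ and cites the identity, whereas you give a self-contained derivation via the double hockey-stick argument.
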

\begin{proof} Follows from Theorem~\ref{thm 5.19} and induction on $d$, using the following summation formula~\cite{Lo}:
$$
\sum_{k=1}^{d-1}k\binom{d+n-k-4}{n-3}=\binom{d+n-3}{n-1}.
$$
\end{proof}
Recall that the Gotzmann thresholds of $x_2^d$ in $S_3$ and $S_4$ are given by the formulas
\begin{align*}
\tau_3(x_2^d) &= \binom{d}{2}, \\
\tau_4(x_2^d) &= \binom{\binom{d}{2}}{2} + \frac{d+4}{3}\binom{d}{2},
\end{align*}
from~\cite{Mu2} and~\cite{BE}, respectively. Here is the main result of this section. 
\begin{theorem} For all $d \ge 2$, the Gotzmann threshold of $x_2^d$ in $R_5$ is given by
$$
\tau_5(x_2^d) = \binom{\binom{\binom{d}{2}}{2}+\binom{d+1}{3}+\binom{d}{2}}{2}-\binom{\binom{d}{2}}{3}+\binom{d+3}{4}-d.
$$
\end{theorem}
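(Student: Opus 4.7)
The plan is to apply Theorem~\ref{thm tau(u0)} to $u_0 = x_2^d \in S_4$, which gives $\tau_5(x_2^d) = f(0) - h(0) - k(0)$ for the polynomials $f, h, k$ attached to $u_0$. The computation of $f(0)$ is immediate: by the proposition at the start of Section~\ref{sec x2} together with Corollary~\ref{cor mg t} and the expanded formula for $\sigma_5^t$, one finds
\[
\mg_5(x_2^d x_5^t) \;=\; x_3^{\binom{d}{2}} \, x_4^{\binom{d}{2} t + \binom{d+1}{3}} \, x_5^{\,\binom{d}{2}\binom{t+1}{2} + \binom{d+1}{3} t + \binom{d+2}{4}},
\]
so $f(0) = \binom{d+2}{4}$ and the $4$-target is $w(t) = x_3^{\binom{d}{2}} x_4^{\binom{d}{2} t + \binom{d+1}{3}}$, which equals $\mg_4(x_2^d x_4^t)$ by Corollary~\ref{cor pi mgn}.

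For $h(0)$ and $k(0)$, I construct the upward path in $S_5$ from $x_2^d x_5^t$ until $\pi_4$ of the cumulative cost first attains $w(t)$, following the strategy in the proof of Theorem~\ref{thm main}. After the initial segment $x_2^d x_5^t \arrow{x_5^t}{0.6cm} x_2^d x_4^t$, the path emulates the upward path from $x_2^d x_4^t$ to its Gotzmann endpoint $\tilde{u}_4(t) \in S_4$, but embedded in $R_5$: each elementary step $u' x_j^a \arrow{x_j}{0.5cm} u' x_{j-1} x_4^{a-1}$ in $S_4$ is replaced by a two-step block $u' x_j^a \arrow{x_j}{0.5cm} u' x_{j-1} x_5^{a-1} \arrow{x_5^{a-1}}{0.8cm} u' x_{j-1} x_4^{a-1}$ in $S_5$ that accrues an extra $x_5^{a-1}$. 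When the cumulative $\pi_4$-cost reaches $w(t)/x_4$, one more $x_4$-step produces $w(t)$ exactly and terminates at $z_5(t)$. By Theorem~\ref{thm main}, $z_5(t) = v \cdot x_3 \cdot x_5^{\,k(0)+t}$ with $v \in S_4$ fixed, so $k(t) = t + k(0)$; summing the $x_5$-contributions block by block (using Proposition~\ref{partial} for each partial conversion) yields the polynomial $h(t)$, while the identity $h(t) - h(0) = f(t) - f(0)$ from Proposition~\ref{prop Delta f} provides a cross-check. Substituting $t = 0$ and applying standard binomial identities --- in particular $\binom{d+1}{3} + \binom{d}{2} = \frac{d+4}{3} \binom{d}{2}$, so that the middle binomial in the stated formula is recognised as $\tau_4(x_2^d)$ --- then produces the claimed expression.

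The principal difficulty is the effective evaluation of $h(0)$: the Gotzmann path of $x_2^d x_4^t$ in $R_4$ is itself nested, with the $\binom{d}{2}$ production steps of $x_3$ interspersed with level-transitions $x_3^j \to x_2 x_5^{j-1}$ moving between successive ``$x_2$-levels'', and the $x_5$-exponents accrued in the $S_5$-embedding depend on the $x_4$-exponent at each such step. Organising the resulting multi-indexed sum and collapsing it into the outer binomial $\binom{\tau_4(x_2^d)}{2}$ together with the correction terms $-\binom{\binom{d}{2}}{3} + \binom{d+2}{3} - d$ (the last of which comes from $-k(0)$ and the shift $\binom{d+3}{4} - \binom{d+2}{4} = \binom{d+2}{3}$) constitutes the bulk of the technical work of this section.
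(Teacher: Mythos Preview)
Your outline is correct in its broad strokes --- the computation of $f(0)$, the identification of $z_5(t)$ as the first point where the $\pi_4$-cost equals $w(t)$, and the invocation of Theorem~\ref{thm tau(u0)} --- but the route you propose for computing $h(t)$ is substantially more laborious than the paper's, and you leave the hard part undone.

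The paper does \emph{not} embed the full $S_4$-Gotzmann path block by block. Instead it introduces an intermediate waypoint $z_4(t)$, defined as the first point where the $3$-truncated cost $\pi_3(\mu_5(x_2^dx_5^t,\,\cdot\,))$ reaches the $3$-target $x_3^{f_3}$. A short direct computation (three applications of Proposition~\ref{partial}, at $m=5,4,3$) shows $z_4(t)=x_2^{d+f_3}x_5^{t-f_3}$, a monomial involving only $x_2$ and $x_5$. The crucial consequence is that the \emph{second} stage, from $z_4(t)$ to $z_5(t)$, is again just a straightforward partial conversion $x_5\to x_4\to x_3$ with target $x_4^{\delta_4}$, where $\delta_4=f_4(t)-h_4(t)$ is constant. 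One more application of Proposition~\ref{partial} at $m=4$ gives the $x_5$-exponent of this second stage in closed form. Thus $h_5(t)$ is obtained as a sum of two closed binomial expressions, with no ``multi-indexed sum'' to collapse. Your nested block-by-block approach can be made to work --- summing over the $f_3$ levels via the hockey-stick identity does eventually yield the same $h_5(t)$ --- but it requires an extra summation layer that the paper's two-stage decomposition sidesteps entirely. The staging by successive $i$-targets ($i=3$, then $i=4$) is the idea you are missing, and it is also what would generalize to higher $n$.

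One small accounting error: you write that the $-d$ term comes from $-k(0)$, but in fact $-k(0)=f_3+\delta_4=\tau_4(x_2^d)$, which is large. In the paper's organization, $\tau_5=\tau_4+\delta_5$ with $\delta_5=f_5(0)-h_5(0)$; the $-d$ arises only after combining $\tau_4$ with the lower-order terms of $\delta_5$ via $\binom{d}{2}+\binom{d+1}{3}+\binom{d+2}{4}=\binom{d+3}{4}-d$.
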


\begin{proof} Fix $d \ge 2$. By the previous proposition for $n=5$, we have
$$
\mg_5(x_2^d)=x_3^{f_3}x_4^{f_4}x_5^{f_5},
$$
where 
$$
\begin{aligned}
f_3 &=\binom{d}{2}, \\
f_4 &=\binom{d+1}{3}, \\
f_5 &=\binom{d+2}{4}.
\end{aligned}
$$
Now, given $t \ge 0$, we have
$$
\mg_5(x_2^dx_5^t)=\sigma_5^t(\mg_5(x_2^d))=x_3^{f_3(t)}x_4^{f_4(t)}x_5^{f_5(t)},
$$
where 
$$
\begin{aligned}
f_3(t) &= f_3=\binom{d}{2}, \\
f_4(t) &= f_3\hspace{0.4mm} t+f_4, \\
f_5(t) &=f_3\hspace{0.4mm}\binom{t+1}{2}+f_4 \hspace{0.4mm} t+f_5.
\end{aligned}
$$

As a first step, we seek the smallest $t$ so that a monomial $z_4(t)$ exists with the property $z_4(t) > x_2^dx_5^t$ and $\pi_3(\mu_5(x_2^dx_5^t,z_4(t)))=x_3^{f_3}$. The various steps of the required path are as follows:
$$
\begin{aligned}
& x_2^dx_5^t \arrow{x_5^t}{5.5cm} x_2^dx_4^t \\
& x_2^dx_4^t \arrow{x_4^tx_5^{\binom{t}{2}}}{5.5cm} x_2^dx_3^t \\
& x_2^dx_3^t \arrow{x_3^{f_3-1}x_4^{\binom{t}{2}-\binom{t-f_3+1}{2}}x_5^{\binom{t+1}{3}-\binom{t-f_3+2}{3}}}{5.5cm} x_2^{d+f_3-1}x_3^{t-f_3+1}.
\end{aligned}
$$
The cost of the last step is justified by Proposition~\ref{partial}. At this point, the $3$-target  $x_3^{f_3}$ is almost reached. One last step will allow to reach that target:
$$
x_2^{d+f_3-1}x_3^{t-f_3+1} \arrow{x_3}{2cm} x_2^{d+f_3}x_5^{t-f_3}.
$$
Hence $z_4(t)=x_2^{d+f_3}x_5^{t-f_3}$, since this is the very first time we reach the $3$-target $x_3^{f_3}$. The cost of the upward path from $x_2x_5^t$ to $z_4(t)$ is given by the product of the respective costs of each of the above steps, i.e.
$$
\begin{aligned}
\mu_5(x_2^dx_5^t,z_4(t)) &= x_3^{f_3}x_4^{t+\binom{t}{2}-\binom{t-f_3+1}{2}}x_5^{t+\binom{t}{2}+\binom{t+1}{3}-\binom{t-f_3+2}{3}} \\
&= x_3^{f_3}x_4^{\binom{t+1}{2}-\binom{t-f_3+1}{2}}x_5^{\binom{t+2}{3}-\binom{t-f_3+2}{3}}.
\end{aligned}
$$
Set $h_4(t)=\binom{t+1}{2}-\binom{t-f_3+1}{2}$. Thus
\begin{equation}\label{h4}
\mu_5(x_2^dx_5^t,z_4(t)) = x_3^{f_3}x_4^{h_4(t)}x_5^{\binom{t+2}{3}-\binom{t-f_3+2}{3}}.
\end{equation}
By minimality of $z_4(t)$, we have $h_4(t) \le f_4(t)$. We know that the difference $f_4(t)-h_4(t)$ is constant. Let us call this constant $\delta_4$, so that
$$
\delta_4=f_4(t)-h_4(t)=f_4(0)-h_4(0).
$$
Since $\displaystyle h_4(0)=\binom{1-f_3}{2}$, we have
$$
\delta_4=\binom{d+1}{3}+\binom{1-f_3}{2}.
$$
Using the formula $\binom{-a}{2}=\binom{a+1}{2}$, we get
$$
\delta_4=\binom{d+1}{3}+\binom{f_3}{2}.
$$
Using the formulas $\tau_3(x_2^d)=f_3=\binom{d}{2}$ and $\delta_4=\tau_4(x_2^d)-\tau_3(x_2^d)$, we recover the formula from~\cite{BE}, namely
$$
\tau_4(x_2^d) = \binom{d}{2}+\binom{d+1}{3}+\binom{\binom{d}{2}}{2}.
$$
Using $h_4(t)=f_4(t)-\delta_4$, formula~\eqref{h4} becomes
$$\mu_5(x_2^dx_5^t,z_4(t)) = x_3^{f_3}x_4^{f_4(t)-\delta_4}x_5^{\binom{t+2}{3}-\binom{t-f_3+2}{3}}.$$
Our next task is to find the smallest monomial $z_5(t)$ with the property
$$
\pi_4(\mu_5(x_2^dx_5^t,z_5(t))) = \pi_4(\mg_5(x_2^dx_5^t))=x_3^{f_3}x_4^{f_4(t)}=x_3^{f_3}x_4^{h_4(t)+\delta_4}.
$$
That is, we seek $z_5(t)$ so that $\pi_4(\mu_5(z_4(t),z_5(t)))=x_4^{\delta_4}$. To that end, let us continue the path from $z_4(t)$ on.
$$
\begin{aligned}
& z_4(t)=x_2^{d+f_3}x_5^{t-f_3} \arrow{x_5^{t-f_3}}{1.2cm} x_2^{d+f_3}x_4^{t-f_3} \\
& x_2^{d+f_3}x_4^{t-f_3} \arrow{x_4^{\delta_4-1}x_5^{\binom{t-f_3}{2}-\binom{t-f_3-\delta_4+1}{2}}}{4.5cm} x_2^{d+f_3}x_3^{\delta_4-1}x_4^{t-f_3-\delta_4+1}.
\end{aligned}
$$
The cost of the last step is again justified by Proposition~\ref{partial}. The $4$-target is almost reached. One last step will allow to reach that target:
$$
x_2^{d+f_3}x_3^{\delta_4-1}x_4^{t-f_3-\delta_4+1} \arrow{x_4}{1.2cm} x_2^{d+f_3}x_3^{\delta_4}x_4^{t-f_3-\delta_4}.
$$
Since $\pi_4(\mu_5(z_4(t),x_2^{d+f_3}x_3^{\delta_4}x_4^{t-f_3-\delta_4}))=x_4^{\delta_4}$, we conclude 
$$z_5(t)=x_2^{d+f_3}x_3^{\delta_4}x_4^{t-f_3-\delta_4}.$$
 The cumulative cost from $z_4(t)$ to $z_5(t)$ is
$$
\mu_5(z_4(t),z_5(t))=x_4^{\delta_4}x_5^{(t-f_3)+\binom{t-f_3}{2}-\binom{t-f_3-\delta_4+1}{2}}.
$$
Hence
$$
\mu_5(x_2^dx_5^t,z_5(t))=x_3^{f_3}x_4^{f_4(t)}x_5^{h_5(t)},
$$
where 
$$
\begin{aligned}
h_5(t)& = \binom{t+2}{3}-\binom{t+2-f_3}{3}+(t-f_3)+\binom{t-f_3}{2}-\binom{t-f_3-\delta_4+1}{2} \\
& = \binom{t+2}{3}-\binom{t+2-f_3}{3}+\binom{t+1-f_3}{2}-\binom{t-f_3-\delta_4+1}{2}.
\end{aligned}
$$
Again, we know that $f_5(t)-h_5(t)$ is a constant, let us call it $\delta_5$. Thus
$$
\delta_5=f_5(t)-h_5(t)=f_5(0)-h_5(0).
$$
We also know that $\delta_5=\tau_5(x_2^d)-\tau_4(x_2^d)$. Now
$$
\begin{aligned}
f_5(0) &= \binom{d+2}{4}, \\
h_5(0) &= -\binom{2-f_3}{3}+\binom{1-f_3}{2}-\binom{-f_3-\delta_4+1}{2}.
\end{aligned}
$$
Using the formulas $\binom{-a}{3}=-\binom{a+2}{3}$ and $\binom{-b}{2}=\binom{b+1}{2}$, we have
$$
\begin{aligned}
\delta_5 &= f_5(0)-h_5(0) \\
&= \binom{d+2}{4}+\binom{2-f_3}{3}-\binom{1-f_3}{2}+\binom{1-f_3-\delta_4}{2} \\ 
&= \binom{d+2}{4}-\binom{f_3}{3}-\binom{f_3}{2}+\binom{f_3+\delta_4}{2} \\
&= \binom{d+2}{4}-\binom{\binom{d}{2}}{3}-\binom{\binom{d}{2}}{2}+\binom{\binom{d}{2}+\binom{d+1}{3}+\binom{\binom{d}{2}}{2}}{2}.
\end{aligned}
$$
We are now in a position to get an explicit formula for $\tau_5(x_2^d)$. We have
$$
\begin{aligned}
\tau_5(x_2^d) &= \tau_4(x_2^d)+\delta_5 \\
&= \binom{d}{2}+\binom{d+1}{3}+\binom{d+2}{4}-\binom{\binom{d}{2}}{3}+\binom{\binom{d}{2}+\binom{d+1}{3}+\binom{\binom{d}{2}}{2}}{2} \\
&= \binom{\binom{\binom{d}{2}}{2}+\binom{d+1}{3}+\binom{d}{2}}{2}-\binom{\binom{d}{2}}{3}+\binom{d+3}{4}-d.
\end{aligned}
$$
\end{proof}


\subsection{A conjecture}
Recalling the formulas of $\tau_n(x_2^d)$ for $n=3,4,5$, namely
$$
\begin{aligned}
\tau_3(x_2^d) &= \binom{d}{2}, \\
\tau_4(x_2^d) & = \binom{\binom{d}{2}}{2}+\binom{d+1}{3}+\binom{d}{2}, \\
\tau_5(x_2^d) &= \binom{\binom{\binom{d}{2}}{2}+\binom{d+1}{3}+\binom{d}{2}}{2}-\binom{\binom{d}{2}}{3}+\binom{d+3}{4}-d,
\end{aligned}
$$
we are led to the following conjecture.

\begin{conjecture} For all $n \ge 3$, the Gotzmann threshold $\tau_n(x_2^d)$ is a polynomial of degree $2^{n-2}$ in $d$ with dominant term
$$
\frac{d^{2^{n-2}}}{2^{2^{n-2}-1}}=2(d/2)^{2^{n-2}},
$$
i.e. the dominant term of the $(n-2)$-iterated binomial coefficient
$$
\binom {\binom {\binom d2}2}{\stackrel{\cdots}2}.
$$
Equivalently,
$$
\lim_{d \to \infty} \frac{\tau_n(x_2^d)}{\binom{\tau_{n-1}(x_2^d)}2} = 1.
$$
\end{conjecture}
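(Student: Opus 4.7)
The plan is to prove the conjecture by strong induction on $n$, using Theorem~\ref{thm tau(u0)} as the engine. The base cases $n = 3, 4, 5$ have been established via the explicit formulas above. Assuming the conjecture holds for $n-1$, so that $\tau_{n-1}(x_2^d)$ is a polynomial in $d$ of degree $2^{n-3}$ with dominant coefficient $1/2^{2^{n-3}-1}$, the goal is to deduce the analogous statement for $\tau_n(x_2^d)$.

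First, I would write down $\mg_n(x_2^d x_n^t)$ in closed form by combining the explicit expression $\mg_n(x_2^d) = \prod_{i=3}^{n} x_i^{\binom{d+i-3}{i-1}}$ with Corollary~\ref{cor mg t}, yielding $\mg_n(x_2^d x_n^t) = \prod_{i=3}^n x_i^{f_i(t)}$ with each $f_i(t)$ an explicit polynomial in $t$ whose coefficients are binomials in $d$. Next, I would mimic the $n=5$ construction by building the upward path from $x_2^d x_n^t$ to $z_n(t)$ in stages: for each intermediate level $i$ from $3$ to $n-1$, produce a monomial $z_i(t)$ satisfying $\pi_i(\mu_n(x_2^d x_n^t, z_i(t))) = \pi_i(\mg_n(x_2^d x_n^t))$ by invoking Proposition~\ref{partial} to quantify the cost of each partial conversion $x_i \mapsto x_{i-1}$. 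Each stage yields a discrepancy $\delta_i = \tau_i(x_2^d) - \tau_{i-1}(x_2^d)$, and these telescope to give $\tau_n(x_2^d) = \tau_3(x_2^d) + \delta_4 + \cdots + \delta_n$.

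The structural claim at the heart of the argument is that the topmost discrepancy satisfies
$$
\delta_n \;=\; \binom{\tau_{n-1}(x_2^d)}{2} \,+\, (\textrm{terms of strictly smaller degree in } d),
$$
generalizing the identity $\delta_5 = \binom{f_3 + \delta_4}{2} + \textrm{lower order} = \binom{\tau_4(x_2^d)}{2} + \textrm{lower order}$ obtained in the $n = 5$ proof. Granting this, the induction hypothesis gives that $\binom{\tau_{n-1}(x_2^d)}{2}$ has degree $2\cdot 2^{n-3} = 2^{n-2}$ and leading coefficient $\tfrac{1}{2} \bigl(1/2^{2^{n-3}-1}\bigr)^2 = 1/2^{2^{n-2}-1}$. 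Writing $\tau_n(x_2^d) = \tau_{n-1}(x_2^d) + \delta_n$ and noting that $\tau_{n-1}(x_2^d)$ has smaller degree, this becomes the leading term of $\tau_n(x_2^d)$, and the equivalent limit form of the conjecture is then immediate.

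The principal obstacle is carrying out the iterative path construction in full generality. The case $n = 5$ already required several nested partial conversions, each tracked through Proposition~\ref{partial}, and general $n$ demands $n-3$ such nested stages. Identifying the exact form of $z_i(t)$ at each intermediate level, and then isolating the dominant term of $\delta_n$ against a growing tangle of binomial corrections $\binom{f_3}{j}, \binom{f_4}{j}, \ldots$ arising at lower levels, is combinatorially delicate. A more structural or generating-function perspective on the recursion $\tau_n \sim \binom{\tau_{n-1}}{2}$, perhaps capturing the nested path construction abstractly, would likely bypass the explicit bookkeeping, and I expect such an insight---rather than brute iteration---will be what unlocks a complete proof.
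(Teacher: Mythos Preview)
The statement you are addressing is a \emph{conjecture}, not a theorem: the paper does not prove it. After displaying the explicit formulas for $\tau_n(x_2^d)$ with $n=3,4,5$, the paper simply remarks that ``the conjecture holds true for $n=3,4,5$'' and leaves the general case open. So there is no proof in the paper to compare your proposal against.

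Your proposal is not a proof either, and you acknowledge as much in the final paragraph. The entire inductive argument hinges on the ``structural claim'' that
\[
\delta_n \;=\; \binom{\tau_{n-1}(x_2^d)}{2} \,+\, (\textrm{lower-order terms in } d),
\]
but this is precisely the content of the conjecture in its limit form, and you do not prove it. The $n=5$ computation in the paper produced $\delta_5 = \binom{f_3+\delta_4}{2} + \cdots$ because the final partial-conversion step happened to have parameter $f_3+\delta_4 = \tau_4(x_2^d)$; extracting the analogous identity at level $n$ would require knowing the exact exponent of $x_n$ in $z_{n-1}(t)$ after $n-3$ nested conversions, and you give no argument that this exponent equals $t - \tau_{n-1}(x_2^d)$ (or differs from it only by lower-order terms). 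Without that, the induction does not close. What you have written is a reasonable heuristic for \emph{why} the conjecture should be true, consistent with the evidence the paper presents, but it is not a proof, and the paper does not claim one.
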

By the above formulas, the conjecture holds true for $n=3,4,5$.

\bigskip
\bigskip

\noindent
{\small
\textbf{Authors addresses}

\medskip

\noindent
$\bullet$ Vittoria {\sc Bonanzinga}\textsuperscript{a,b},

\noindent
\textsuperscript{a} Univ. Mediterranea di Reggio Calabria, 
DIIES\\
\textsuperscript{b} Via Graziella (Feo di Vito), 89100 Reggio Calabria, Italia \\
\email{vittoria.bonanzinga@unirc.it}

\medskip

\noindent
$\bullet$ Shalom {\sc Eliahou}\textsuperscript{a,b},

\noindent
\textsuperscript{a}Univ. Littoral C\^ote d'Opale, UR 2597 - LMPA - Laboratoire de Math\'ematiques Pures et Appliqu\'ees Joseph Liouville, F-62100 Calais, France\\
\textsuperscript{b}CNRS, FR2037, France\\
\email{eliahou@univ-littoral.fr}
}

\end{document}